\newtheorem{thm}{Theorem}
\newtheorem{defi}[thm]{Definition}
\newtheorem{exa}[thm]{Example}
\newtheorem{lem}[thm]{Lemma}
\newtheorem{cor}[thm]{Corollary}
\def \se {\mathrm{se}} 
\def \ne {\mathrm{ne}}  
\def \M {\mathcal{M}}
\def \N {\mathcal{N}}
\def \R {\mathcal{R}}
\def \F {\mathcal{F}}
\def \S {\mathcal{S}}
\begin{document}
\title{Maximal increasing sequences in fillings of almost-moon polyominoes }
\author{
Svetlana Poznanovi\'c$^1$ and  Catherine H.
Yan$^2$ \\ [6pt]
$^{1}$ Department of Mathematical Sciences\\
Clemson University, Clemson, SC 29634  \\ [6pt] 
$^{2}$Department of Mathematics\\
Texas A\&M University, College Station, TX 77843, USA\\[5pt]
}
\date{} 
\maketitle
\begin{abstract}
It was proved by Rubey that the number of fillings with zeros and ones of a given moon 
polyomino that do not contain a northeast chain of  size $k$  depends only 
on the set of columns of the polyomino, but not the shape of the polyomino. Rubey's proof is an 
adaption of \emph{jeu de taquin } and promotion for arbitrary fillings of moon polyominoes. 
In this paper we present a bijective proof for this result by considering fillings of 
almost-moon polyominoes, which are moon polyominoes after removing one of the rows. Explicitly, 
we construct a bijection which preserves the  size of the largest northeast chains of the fillings when two adjacent  
rows of the polyomino are exchanged. This bijection also preserves the column sum of the fillings. 
We also present a bijection that preserves  the size of the largest northeast chains, the row sum and the column sum if every 
row of the fillings has at most one $1$. 
\end{abstract}

\noindent{\bf Keywords:} maximal chains,  moon polyominoes

\noindent {\bf MSC Classification:} 05A19 

{\renewcommand{\thefootnote}{} \footnote{\emph{E-mail addresses}:
spoznan@clemson.edu (S.~Poznanovi\'c), cyan@math.tamu.edu (C.H.~Yan) }

\footnotetext[1]{The first author is partially supported by NSF grant DMS-1312817. 
The second author is partially supported by NSF grant DMS-1161414.} 

\section{Introduction} \label{S:introduction}


The systematic study of  matchings and set partitions with certain restrictions on their crossings 
and nestings started in~\cite{CDDSY}, where Chen et al. used Robinson-Schensted-like insertion/deletion processes to 
show the symmetry between the sizes  of the largest crossings and the largest nestings.  Lying in the heart of \cite{CDDSY} is 
Greene's Theorem on the relation between increasing and decreasing subsequences in permutations and the shape of the tableaux 
which are obtained by the Robinson-Schensted correspondence.  These results have been put in a larger context of enumeration 
of fillings of polyominoes where one imposes restrictions on the increasing and decreasing chains of the fillings.  

Krettenthaler \cite{Krattenthaler} extended the results of \cite{CDDSY} to 01-fillings as well as $\mathbb{N}$-fillings  of Ferrers diagrams, and 
obtained various generalizations using Fomin's growth diagrams \cite{Fomin86,Fomin94,Fomin95}.  Jonsson,  motivated by the
problem of counting 
generalized triangulations with a given size of the maximal crossings, proved that 
the number of 01-fillings of a stack polyomino that do not contain a northeast chain of size  $k$ depends only on the 
distribution of lengths of the columns of the polyomino.   A stack polyomino is a convex polyomino in which the rows   are arranged in a  
descending order  from top to bottom. 
Jonsson \cite{Jon05} proved the result first for maximal 01-fillings using an involved induction,  and
later with Welker \cite{JW07}  for all 
01-fillings with  a fixed number of 1's using the machinery of simplicial complexes and commutative algebra.  
Rubey  \cite{Rubey} generalized Jonsson and Welker's result to moon 
polyominoes, using an adaptation of \emph{jeu de taquin} and promotion for arbitrary fillings of moon polyominoes.  

Moon polyominoes are polyominoes that are convex and intersection-free. In moon polyomino the lengths of rows (and columns) are 
arranged in a unimodal order.   For a moon polyomino $\M$, let $\sigma \M$ be another moon polyomino obtained 
by  permuting the rows of $\M$.    Rubey proved that
the number of 01-fillings with the longest northeast chains of size $k$ and  exactly $c_i$ non-zero entries in column $i$ 
are equal for $\M$ and $\sigma\M$. 
It is a very interesting property for  fillings of moon polyominoes: 
many combinatorial statistics are invariant under permutations of rows (or columns). In addition to Rubey's results, 
it is also known for a major index introduced by Chen, Poznanovic, Yan and Yang \cite{CPYY}, 
 for the numbers of northeast and southeast chains of length 2  by Kasraoui \cite{Kasra10}, and for various 
analogs and generalizations of 2-chains  \cite{CWYZ,WY13} .

The purpose of the present paper is to construct bijective proofs of Rubey's result. 
 Inspired by the work on layer polyominoes \cite{PYY13},  we  seek to extend the moon polyominoes 
to a general family that would  allow us to transform the moon polyomino $\M$ to $\sigma \M$ by a sequence of steps that 
interchange two adjacent rows at each time.   For this purpose we introduce the notion of almost-moon polyominoes, which 
become moon polyominoes after removing one of its rows (see Section 2 for  the exact definition).  
Let $\M$ and $\N$ be two almost-moon polyominoes that are related by an interchange of two adjacent rows.  We present two
bijections. The first is a map $\phi_{\M,\N}$   from 01-fillings of $\M$ with exactly $n$ 1's  to those of $\N$ such that 
it preserves the size of the longest northeast chains and the column sum.  The second map $\psi_{\M,\N}$ is restricted to 
fillings where every row has at most one $1$, and preserves the size of the longest northeast chains, the row 
sum,  and the column sum. 

Rubey's result implies that the number of 01-fillings of a moon polynomial without northeast chains of size $k$ equals those 
without southeast chains of size $k$. There are also known combinatorial transformations and bijections between certain families  
of fillings that avoid northeast chains of size $k$ and southeast chains of size $k$, for example,  by Backelin, West 
and Xin \cite{BWX07}  for 01-fillings of Ferrers diagrams  where every row and every column has exactly one $1$, and 
by de Mier \cite{deMier}  for $\mathbb{N}$-fillings of Ferrers diagrams with fixed row sum and column sum.  Nevertheless, 
none of them gives a bijection 
on the fillings of polyominoes that preserves the size of the longest northeast chains.

The paper is organized as follows. Section 2 contains necessary notations and the statement of main results. 
In Section 3 we construct the bijection $\phi_{\M,\N}$ in fillings of almost-moon polyominoes which preserves the size of maximal 
northeast chains and the columns sum.  In section 4 we restrict to  fillings that have at most one 1 in each row, and describe
the bijection $\psi_{\M,\N}$. We conclude 
the paper with some comments and counterexamples to a few seeming natural generalizations in Section 5.

\section{Notation and statements of the main results} 

A \emph{polyomino} is a finite subset of  $\mathbb{Z}^2$, where we represent every element $(i,j)$ of
 $\mathbb{Z}^2$ by a square cell. The polyomino is \emph{row-convex} (\emph{column-convex}) if
 its every row (column) is connected. If the polyomino is both row- and column-convex, we say
 that it is \emph{convex}. It is \emph{intersection-free} if every two columns are \emph{comparable}, 
i.e., the row-coordinates of one column form a subset of those of the other column. Equivalently,
 it is intersection-free if every two rows are comparable. A \emph{moon polyomino} is a convex 
intersection-free polyomino (e.g. Figure~\ref{fig:moon}).
The length of a row (or a column) is the number of cells in it. 
Note that in a moon polyomino the lengths of rows from top to bottom form a unimodal sequence.  
We will say that a row $\R$ is an  \emph{exceptional row}  of a  polyomino $\M$ 
  if there are rows above $\R$ and below $\R$ with  larger lengths in $\M$.  
 An \emph{almost-moon polyomino} is a polyomino with comparable convex rows and
 at most one exceptional row  (e.g. Figure~\ref{fig:almostmoon}). Therefore, every moon polyomino is 
also an almost-moon polyomino and an almost-moon 
polyomino is not necessarily column-convex. 

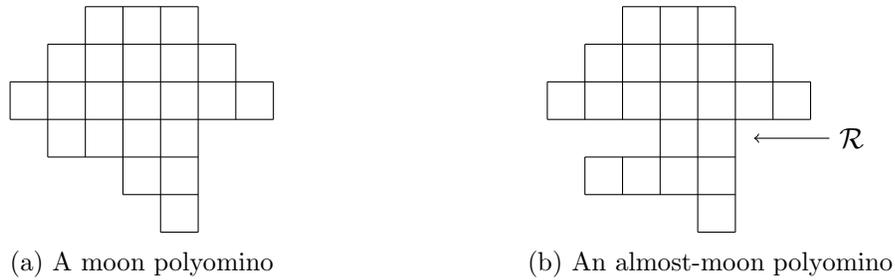
\begin{figure}[ht]
\centering
\begin{subfigure}[b]{0.45\textwidth}
\centering
  \begin{tikzpicture}
\draw (0, 1.5) -- (0,2);
\draw (0.5, 1) -- (0.5,2.5);
\draw (1, 1) -- (1,3);
\draw (1.5, 0.5) -- (1.5,3);
\draw (2, 0) -- (2,3);
\draw (2.5, 0) -- (2.5,3);
\draw (3, 1.5) -- (3,2.5);
\draw (3.5, 1.5) -- (3.5,2);

\draw (2,0) --(2.5,0);
\draw (1.5,0.5) --(2.5,0.5);
\draw (0.5,1) --(2.5,1);
\draw (0,1.5) --(3.5,1.5);
\draw (0,2) --(3.5,2);
\draw (0.5,2.5) --(3,2.5);
\draw (1,3) --(2.5,3);
\end{tikzpicture}
\caption{A moon polyomino}
\label{fig:moon}
\end{subfigure}
\begin{subfigure}[b]{0.45\textwidth}
\centering
\begin{tikzpicture}
\draw (0, 1.5) -- (0,2);
\draw (0.5, 0.5) -- (0.5,1); \draw (0.5,1.5) -- (0.5,2.5);
\draw (1, 0.5) -- (1,1); \draw (1, 1.5) -- (1, 3);
\draw (1.5, 0.5) -- (1.5,3);
\draw (2, 0) -- (2,3);
\draw (2.5, 0) -- (2.5,3);
\draw (3, 1.5) -- (3,2.5);
\draw (3.5, 1.5) -- (3.5,2);

\draw (2,0) -- (2.5,0);
\draw (0.5,0.5) --(2.5,0.5);
\draw (0.5,1) --(2.5,1);
\draw (0,1.5) --(3.5,1.5);
\draw (0,2) --(3.5,2);
\draw (0.5,2.5) --(3,2.5);
\draw (1,3) --(2.5,3);
\draw[->] (3.75, 1.25) node[right] {$\mathcal{R}$} -- (2.75, 1.25); 
\end{tikzpicture}
\caption{An almost-moon polyomino}
\label{fig:almostmoon}
\end{subfigure}
\caption{A moon polyomino and an almost-moon polyomino with an exceptional row $\R$ that differ by an
 interchange of adjacent rows. }
\label{fig:moonexample}
\end{figure}

In this paper we will consider polyominoes whose cells are filled with zeros and ones.
 A \emph{northeast chain}, or shortly \emph{ne-chain}, of size $k$ in such a filling is a
 set of $k$ cells $\{(i_1, j_1), (i_2, j_2), \dots, (i_k, j_k)\}$ with $i_1 < \cdots < i_k$, $ j_1 < \cdots < j_k$
 filled with 1's such that the $k \times k$ submatrix 
\[   \mathcal{G} = \{ (i_r, j_s) : 1 \leq r \leq k, 1 \leq s \leq k\} \] is 
contained in the polyomino (with no restriction on the filling of the other cells). 
See Figure~\ref{fig:nechain} for  an illustration. We will call the ne-chains  of size $k$ 
shortly $k$-chains. Note that in a moon polyomino $\M$, $k$ 1-cells in a north-east direction satisfy 
the submatrix condition if and only if the corners $(i_1, j_k)$ and $(i_k, j_1)$ are contained in $\M$,
 which is equivalent to the whole rectangle determined by these corners being contained in $\M$. In an
 almost-moon polyomino, the submatrix condition is satisfied if and only if the vertices $(i_1, j_k)$ 
and $(i_k, j_1)$ either determine a rectangle which is completely contained in $\M$ or an almost-rectangle 
 with one exceptional row contained in $\M$. In the latter case, the exceptional row does not contain any
 elements from the ne-chain.

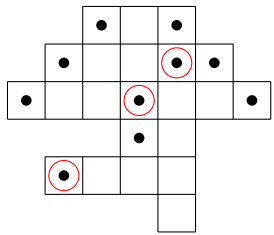
\begin{figure}[ht]
\centering
\begin{tikzpicture}
\draw (0, 1.5) -- (0,2);
\draw (0.5, 0.5) -- (0.5,1); \draw (0.5,1.5) -- (0.5,2.5);
\draw (1, 0.5) -- (1,1); \draw (1, 1.5) -- (1, 3);
\draw (1.5, 0.5) -- (1.5,3);
\draw (2, 0) -- (2,3);
\draw (2.5, 0) -- (2.5,3);
\draw (3, 1.5) -- (3,2.5);
\draw (3.5, 1.5) -- (3.5,2);

\draw (2,0) -- (2.5,0);
\draw (0.5,0.5) --(2.5,0.5);
\draw (0.5,1) --(2.5,1);
\draw (0,1.5) --(3.5,1.5);
\draw (0,2) --(3.5,2);
\draw (0.5,2.5) --(3,2.5);
\draw (1,3) --(2.5,3);
\fill  (0.25,1.75) circle (2pt);  
\fill  (0.75,0.75) circle (2pt);  \draw[color=red] (0.75, 0.75) circle (.2);
\fill  (0.75,2.25) circle (2pt);
\fill  (1.25,2.75) circle (2pt);
\fill  (1.75,1.25) circle (2pt);
\fill  (1.75,1.75) circle (2pt); \draw[color=red] (1.75, 1.75) circle (.2);
\fill  (2.25,2.25) circle (2pt); \draw[color=red] (2.25, 2.25) circle (.2);
\fill  (2.25,2.75) circle (2pt); 
\fill  (2.75,2.25) circle (2pt); 
\fill  (3.25,1.75) circle (2pt); 
\end{tikzpicture}
\caption{A 01-filling $M$ of an almost-moon polyomino with $\ne(M)=3$. The 1's are represented by dots and 
the 0-cells are drawn empty. The circled dots form the only 3-chain in $M$.}
\label{fig:nechain}
\end{figure}

For a 01-filling  $M$ of an almost-moon polyomino, we denote by $\ne(M)$ the size of its largest ne-chains. 
 Suppose that $\M$ has $k$ rows and $\ell$ columns and let 
$\textbf{r} \in \mathbb{N}^k$ and $\textbf{c} \in \mathbb{N}^\ell$. We will denote by
 $ \F(\M)$ the set of all 01-fillings of $\M$, by $\F(\M, n)$ those fillings with exactly $n$ 1's, 
 and by $\F(\M, \textbf{r}, \textbf{c})$  the set of  all 01-fillings with row sums given by  $\textbf{r}$ 
and column sums given by $\textbf{c}$. Our first main result states that if $\M$ and $\N$ are two almost-moon
 polyominoes related by an interchange of adjacent rows (e.g. Figure~\ref{fig:moonexample}), then the 
statistic $\ne$ is equidistributed over the sets $\F(\M, \ast, \textbf{c})$ and $ \F(\N, \ast, \textbf{c})$ 
of fillings of $\M$ and $\N$, respectively, with fixed column sums but arbitrary row sums:
\[ \sum_{M \in \F(\M, \ast, \textbf{c})} q^{\ne(M)} = \sum_{M \in \F(\N, \ast, \textbf{c})} q^{\ne(M)}. \] More precisely, we
 have the following theorem.

\begin{thm} \label{thm1} Let $\M$ and $\N$ be two almost-moon polyominoes such that $\N$ can be obtained 
from $\M$ by an interchange of  two adjacent rows. In addition assume that $\M$ and $\N$ have no exceptional rows other than 
the swapped ones. Then there is a bijection  
\[ \phi_{\M, \N}: \F(\M) \longrightarrow \F(\N) \] that preserves the column sums of the fillings and such 
that $\ne(\phi_{\M, \N} (M)) = \ne(M)$ for $M \in \F(\M)$. Moreover, $\phi_{\N, \M} \circ \phi_{\M, \N} = 1_{\F(\M)}$.
\end{thm}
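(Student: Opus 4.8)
The plan is to make $\phi_{\M,\N}$ act as the identity on every cell outside the two interchanged rows, so that the whole problem reduces to a local statement about a horizontal strip of two rows with the rest of the filling frozen as boundary data. Write $L$ for the longer and $S$ for the shorter of the two swapped rows, let $A$ be the set of columns met by $S$ and $B$ the set met by $L$; comparability of rows forces $A \subseteq B$. A column of the strip is of one of three types: it meets neither row (and is untouched by $\phi$), it meets only $L$ (a column of $B \setminus A$), or it meets both (a column of $A$). To preserve the column sums, the single entry of each $B\setminus A$ column must travel rigidly with $L$ as $L$ moves from one side of $S$ to the other, and in each shared column of $A$ the number of $1$'s among its two cells must be kept. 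The only genuine freedom lies in the shared columns whose two cells sum to $1$, where the lone $1$ may be placed in either row.

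The first thing to record is that moving the two rows rigidly (carrying each row's content with it) does \emph{not} preserve $\ne$: a single exclusive $1$ of $L$ lying to the left of a shared $1$ of $S$ creates, after the swap, a new northeast $2$-chain. Hence $\phi$ must, in addition to transporting the exclusive cells with $L$, reassign the $1$'s in the sum-$1$ shared columns. I would define this reassignment by a canonical, reversible rule: reading the strip from left to right, the exclusive $1$'s of $L$ and the sum-$1$ shared columns are paired by a noncrossing (bracket-type) matching, and a matched shared $1$ is placed in whichever row prevents it from completing a chain through its partner, while an unmatched sum-$1$ column keeps its entry on the side dictated by the new position of $L$. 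Since the matching and the column sums are determined by data that is symmetric under swapping $L$ and $S$ back, the induced map should satisfy $\phi_{\N,\M}\circ\phi_{\M,\N}=1_{\F(\M)}$, proving injectivity and surjectivity at once.

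To show $\ne(\phi_{\M,\N}(M))=\ne(M)$, I would argue that the contribution of the strip to any northeast chain depends only on invariants preserved by the local rule. A longest chain meets the strip in at most one cell of each row, so I classify chains by whether they use $0$, $1$, or $2$ strip cells. Chains using no strip cell are unaffected: the frozen boundary is identical, and because both strip rows are present in both $\M$ and $\N$, the (almost-)rectangle form of the submatrix condition is insensitive to their reordering. For chains that do use strip cells, the admissible columns are pinched between the portion of the chain lying below the strip and the portion lying above it, and the maximal number of strip cells that can be threaded into such a window is exactly the quantity that the matching rule keeps fixed. Here the exceptional-row subtlety enters: when $S$ is the exceptional (valley) row, chains through the exclusive columns of $L$ must skip $S$, which is precisely the almost-rectangle case of the submatrix condition, so those chains use only the single row $L$ and their length is manifestly invariant under transporting $L$ rigidly.

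The main obstacle I anticipate is pinning down the local reassignment rule so that it simultaneously (i) is an involution under the reverse swap and (ii) preserves, window by window, the maximal number of usable strip cells for every possible pair of entering and exiting chains. Verifying (ii) requires a careful interval analysis of how sum-$1$ shared columns, sum-$2$ shared columns, and exclusive cells can be threaded by a single northeast chain, combined with the almost-rectangle bookkeeping for the exceptional row. It is this case analysis, rather than the reduction to two rows, where I expect the real work to lie.
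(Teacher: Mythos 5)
Your overall skeleton coincides with the paper's: you freeze everything outside the two swapped rows, let the $1$'s in columns exclusive to $\R_l$ travel rigidly with $\R_l$, and locate all remaining freedom in the shared columns whose two cells sum to $1$. This is exactly the structure of the paper's map $f_{\M,\N}$ (Definition~\ref{f_definition}) together with its correction, which moves certain $1$'s vertically between $\R_l$ and $\R_s$ inside shared columns. The genuine gap is in how you decide \emph{which} shared $1$'s to move. First, the premise behind your matching rule is false: an exclusive $1$ of $\R_l$ and a $1$ lying in the short row $\R_s$ can never form a northeast $2$-chain, neither in $\M$ nor in $\N$, because the $2\times 2$ submatrix required by the definition contains the cell in the exclusive column and the short row, and that cell is not in the polyomino; the almost-rectangle relaxation cannot rescue this, since it requires the exceptional row to contain no cell of the chain, whereas here the short row contains one. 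Consequently your claim that ``a single exclusive $1$ of $L$ lying to the left of a shared $1$ of $S$ creates, after the swap, a new northeast $2$-chain'' is incorrect, and the instruction to place a matched shared $1$ ``in whichever row prevents it from completing a chain through its partner'' never constrains anything: the noncrossing matching has nothing to prevent, so the rule is effectively undefined.

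Second, and more fundamentally, no rule that reads only the two-row strip can be correct, because the obstruction is global: what goes wrong after the swap is that a shared $1$, once it sits in the long row of $\N$, can enter chains that sweep across the exclusive columns to reach $1$'s far outside the strip, and whether such a chain attains length $\ne(M)+1$ depends on those far-away cells. Concretely, let $\M$ have rows of lengths $5,3,5$ (columns $1$--$5$, $2$--$4$, $1$--$5$ from top to bottom), let $\N$ be obtained by swapping the bottom two rows, and put a single strip $1$ in the short row at column~$3$. If the top row has a $1$ in column~$5$, then $\ne(M)=1$, but keeping the strip $1$ at its height (so that it joins the long row of $\N$) creates a $2$-chain, so any $\ne$-preserving map in your framework \emph{must} move this $1$ into $\R_s$ of $\N$; if instead the top row is empty, either placement is fine. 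Identical strip, different forced behavior: the decision requires knowing where the maximal chains of the whole filling run. This is precisely what the paper's notion of \emph{problem cells} encodes---the $1$'s of $\alpha$ that lie on $(k+1)$-chains of $f_{\M,\N}(M)$---and the content of Lemmas~\ref{beta0}--\ref{case3} is exactly the proof that shifting those cells, and only those, preserves $\ne$ and is reversible. Your own criterion~(ii), window-by-window preservation of threadable strip cells, cannot even be formulated for a strip-local rule, because the ``windows'' (the entering and exiting chain segments that matter) are determined by the global filling. Your plan also omits an analogue of the paper's Case~III, where the rigid-type map \emph{decreases} $\ne$ and the correction must be applied in the reverse direction and conjugated by $f_{\M,\N}$.
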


Let $\mathcal{M}$ be an almost-moon polyomino with $k$ rows, $\sigma \in S_{k}$, and suppose the polyomino $\sigma\M$ 
obtained by permuting the rows of $\M$ according to $\sigma$ is also an almost-moon polyomino. Note that $\sigma\M$
 can also be obtained  by a sequence of steps in which only two adjacent rows are interchanged. Moreover, the
 order of steps can be chosen so that the intermediate polyominoes are all almost-moon polyominoes with no exceptional rows
 other than the swapped ones. In other
 words, the set of all $\sigma\M$ which are almost-moon polyominoes is connected by transposition of adjacent 
rows. One way to see this is to note that one can reach the polyomino in which the row lengths are descending 
from top to bottom by starting from $\M$ and first moving its exceptional row down until there is no longer rows below it,  
then moving the shortest row of $\M$  to the bottom, the second shortest row to the second position from below, etc. Consequently, by 
composing the maps from Theorem~\ref{thm1} we get the following corollary.

\begin{cor} \label{cor1} Let $\mathcal{M}$ be an almost-moon polyomino with $k$ rows and $\ell$ columns, $\sigma \in S_k$ be
 a permutation of the row indices such that $\sigma\M$  is also an almost-moon polyomino. Let $\textbf{c} \in \mathbb{N}^\ell$.
 Then there is a bijection $\phi: \F(\M, \ast, \textbf{c}) \longrightarrow \F(\sigma\M, \ast, \textbf{c})$ such 
that $\ne(\phi (M)) = \ne(M)$ for $M \in \F(\M, \ast, \textbf{c})$. Moreover, the size of
 $\{ M: M \in \F(\M,n), \ne(M) =k \}$ depends only on the set of lengths of the columns of $\M$. 
\end{cor}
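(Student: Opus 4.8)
The plan is to derive Corollary~\ref{cor1} directly from Theorem~\ref{thm1} by decomposing the permutation $\sigma$ into a sequence of adjacent transpositions and composing the corresponding bijections. The essential point, already sketched in the paragraph preceding the corollary, is that the set of row-permutations $\tau$ for which $\tau\M$ is an almost-moon polyomino is \emph{connected} under adjacent transpositions, and moreover there is a connecting path along which every intermediate polyomino is an almost-moon polyomino \emph{having no exceptional row other than the two rows being swapped at that step}. That last qualification is exactly the hypothesis under which Theorem~\ref{thm1} applies, so the whole argument hinges on producing such a path.

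First I would fix an explicit target ordering and an explicit path to it. The natural target is the stack-like polyomino $\M_0$ in which the row lengths are weakly decreasing from top to bottom; this is a genuine moon polyomino (in fact convex), hence has no exceptional row. To reach $\M_0$ from $\M$, I would first push the (unique) exceptional row of $\M$ downward by successive adjacent swaps until no longer row lies below it, after which the configuration is a genuine moon polyomino, and then sort the remaining rows into decreasing order by a selection-sort: move the shortest row to the bottom, the second shortest to the next position up, and so on. At each individual swap one checks that the only row that could possibly be exceptional in the intermediate polyomino is one of the two being interchanged, so Theorem~\ref{thm1} is applicable to that step. I would then observe that the same reasoning applies to $\sigma\M$: there is a path of admissible adjacent swaps from $\sigma\M$ to the \emph{same} sorted polyomino $\M_0$. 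Composing the bijections of Theorem~\ref{thm1} along the path $\M \rightsquigarrow \M_0$ and the reverses of those along $\sigma\M \rightsquigarrow \M_0$ (using the involutivity $\phi_{\N,\M}\circ\phi_{\M,\N}=1$ from the theorem) yields a bijection $\phi\colon \F(\M)\to\F(\sigma\M)$. Since every factor preserves both $\ne$ and the column sums, so does the composite, and restricting to the fibre $\F(\M,\ast,\mathbf{c})$ gives the stated bijection onto $\F(\sigma\M,\ast,\mathbf{c})$.

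For the final sentence of the corollary I would argue as follows. The map $\phi$ just constructed restricts, for each fixed column-sum vector $\mathbf{c}$ and each fixed value of $\ne$, to a bijection between the corresponding filling sets of $\M$ and $\sigma\M$; summing over all $\mathbf{c}$ with $\sum_i c_i = n$ shows that $\#\{M\in\F(\M,n):\ne(M)=k\}=\#\{M\in\F(\sigma\M,n):\ne(M)=k\}$ whenever $\sigma\M$ is an almost-moon polyomino. In particular this holds whenever $\sigma$ merely permutes the rows of a genuine moon polyomino among arrangements that stay moon polyominoes, so the count depends on $\M$ only through its \emph{multiset} of column lengths, which is invariant under row permutations; reordering the rows does not change which lengths occur in the columns. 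Hence the quantity depends only on the set (multiset) of column lengths of $\M$, as claimed.

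The main obstacle is not the composition step, which is formal once the path exists, but verifying the italicized structural claim: that the selection-sort path can be carried out so that \emph{no extra exceptional row is ever created}. Concretely, when I swap two adjacent rows I must confirm that the resulting polyomino still has comparable convex rows and at most one exceptional row, and that this exceptional row is one of the two just swapped. This requires a careful case analysis of how the lengths of the moved row compare with its new neighbours at each step of both the ``push the exceptional row down'' phase and the selection-sort phase; the unimodality of the row-length sequence in the moon-polyomino portion is what keeps the number of exceptional rows under control. Once that combinatorial bookkeeping is in place, the corollary follows immediately from Theorem~\ref{thm1}.
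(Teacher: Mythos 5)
Your construction of the bijection $\phi$ is essentially the paper's own argument: decompose $\sigma$ into adjacent transpositions along a path through almost-moon polyominoes having no exceptional rows other than the swapped ones (push the exceptional row down, then selection-sort the rest), and compose the maps $\phi_{\M,\N}$ of Theorem~\ref{thm1}, using $\phi_{\N,\M}\circ\phi_{\M,\N}=1_{\F(\M)}$ to traverse the second half of the path backwards. That part is correct and matches the paper.

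There is, however, a genuine gap in your proof of the final sentence. You argue: the count $\#\{M\in\F(\M,n):\ne(M)=k\}$ is invariant under row permutations, and the multiset of column lengths is also invariant under row permutations, hence the count depends only on the multiset of column lengths. This is a non-sequitur: two quantities can each be invariant under row permutations without one determining the other. What must be shown is that \emph{any} two almost-moon polyominoes with the same multiset of column lengths have equal counts, and such polyominoes need not be related by a row permutation at all. For instance, take rows of lengths $3$ and $2$ with the shorter row placed under the left end of the longer one, versus under the right end; both shapes have column lengths $\{2,2,1\}$, but no permutation of rows carries one to the other, since a row permutation keeps every row's column support fixed. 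Your argument cannot compare these two shapes. The paper closes this gap with a transposition step that your proposal lacks: sort the rows of $\M$ to obtain a moon polyomino $\M_1$ with descending row lengths, pass to the transpose $\M_1^{t}$ (legitimate because the transpose of an ne-chain is again an ne-chain, so the $\ne$-statistics agree), and then sort the rows of $\M_1^{t}$ --- i.e.\ the columns of $\M_1$ --- to reach a Ferrers shape $\M_2$. This $\M_2$ is a canonical form determined solely by the multiset of column lengths of $\M$, so the count for $\M$ equals the count for $\M_2$, which proves the claim. Without this detour through the transpose (equivalently, without ever reordering columns), row-permutation invariance alone cannot yield ``depends only on the set of lengths of the columns.''
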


\begin{proof} To prove the second part of the statement, suppose $\M_1$ is the moon polyomino with descending 
row lengths that can be obtained by reordering the rows of $\M$ as in the discussion above. Suppose that 
the same procedure applied to the transpose $\M_1^t$ of $\M_1$ yields the polyomino $\M_2$ with descending 
row lengths. Then $\M_2$ is a Ferrers shape and it depends only on the set of column lengths of $\M$.
 Since the transpose of  ne-chains are also  ne-chains, it follows from the first part that the size 
of $\{ M: M \in \F(\M,n), \ne(M) =k \}$ also depends only on the set of lengths of the columns of $\M$. \end{proof}

The fact that
 $\left|\{ M: M \in \F(\M, \ast, \textbf{c}), \ne(M) =k \} \right|= \left|\{ M: M \in \F(\sigma \M, \ast, 
\textbf{c}), \ne(M) =k \} \right|$ if $\M$ and $\sigma \M$ are moon polyominoes as well as the second 
part of Corollary~\ref{cor1} was proved algebraically by Rubey~\cite{Rubey}. Therefore, our results 
provide a bijective proof of these facts and extend them to a larger set of polyominoes in which 
these properties hold. In Section~\ref{S:remarks} we discuss why this extension is in a sense the best possible.

As discussed in~\cite{Rubey}, one cannot hope to simultaneously preserve both $\textbf{r}$ and $\textbf{c}$,
 i.e.,  the natural generalization 
$\left|\{ M: M \in \F(\M, \textbf{r}, \textbf{c}), \ne(M) =k \} \right| = \left|\{ M: M \in \F(\sigma \M, 
\sigma \textbf{r}, \textbf{c}), \ne(M) =k \} \right|$ does not hold. However, our  second main result implies 
that $\ne$ can be preserved together with both the row and column sums if the fillings are restricted to
 have at most one 1 in each row. 

\begin{thm} \label{thm2} Let $\M$ and $\N$ be two almost-moon polyominoes such that $\N$ can be obtained 
from $\M$ by an interchange of two adjacent rows. In addition assume that $\M$ and $\N$ have no exceptional rows other than 
the swapped ones. If $\textbf{r} \in \{0,1\}^*$ and $\mathbf{c} \in \mathbb{N}^*$, then there
 is a map \[ \psi_{\M, \N}: \F(\M, \textbf{r}, \textbf{c}) \longrightarrow \F(\N, \textbf{r}', \textbf{c})\] 
such that $\ne(\psi_{\M, \N} (M)) = \ne(M)$ for $M \in \F(\M, \textbf{r}, \textbf{c})$, where 
$\mathbf{r}'$ is obtained from $\mathbf{r}$ by exchanging the entries corresponding to the two swapped rows. 
\end{thm}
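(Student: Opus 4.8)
The plan is to fix every row of the filling except the two that are interchanged and to define $\psi_{\M,\N}$ by a purely local rule on those two rows. First I would reduce to a normal form. Since the rows of $\M$ are comparable, the two adjacent rows are nested: writing $S$ for the shorter and $L$ for the longer, the column set $[s_1,s_2]$ of $S$ is contained in the column set $[l_1,l_2]$ of $L$, and $L$ may overhang $S$ on the left and/or the right. If $S=L$ the two rows have the same shape and there is nothing to prove, so assume $S\subsetneq L$. By hypothesis $S$ is the unique exceptional row of both $\M$ and $\N$. The crucial consequence, used throughout, is the following: because each row carries at most one $1$, the almost-rectangle exemption in the definition of a ne-chain means that any ne-chain which does not use the single $1$ of $S$ may skip the row $S$ entirely, so that the validity of such a chain is governed only by $L$ and by the outer rows, all of which are identical in $\M$ and $\N$. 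I would also record the reformulation that, since rows hold at most one $1$, $\ne(M)$ equals the length of the longest strictly increasing subsequence of the bottom-to-top ``column word'' of $M$ whose associated rectangle satisfies the submatrix condition.

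Next I would define $\psi_{\M,\N}$ on the two band rows according to the columns $a$ and $b$ of the $1$'s they contain. When exactly one of the two rows is filled, the filled shape carries its $1$ across to its new vertical position; this already realizes the exchange of the two row sums. When both rows are filled (so both row sums equal $1$ and $\mathbf{r}'$ forces no change in their values), the rule is dictated by the column $b$ of the $1$ in $L$: if $b\in[s_1,s_2]$, then $\psi$ leaves the filling matrix literally unchanged and only reinterprets it inside $\N$, which preserves the bottom-to-top column word; if $b$ lies in the overhang of $L$, then leaving the matrix unchanged would place $b$ in the short row at its new position, which is inadmissible, so instead I exchange the two columns between the two positions (each shape carries its own column). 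In every case the two columns form the same multiset, so the column sums are preserved, the two row sums are exchanged to give $\mathbf{r}'$, and one checks directly that the rule on $\N$ inverts the rule on $\M$, so $\psi_{\M,\N}$ is a bijection with $\psi_{\N,\M}\circ\psi_{\M,\N}=\mathrm{id}$. The design principle is that $S$ can span to a column of $L$'s overhang only when $S$ is not simultaneously forced to hold its own $1$ there; accordingly, when $b$ is in the overhang the two band cells can never lie in a common chain, in either $\M$ or $\N$, which is exactly what makes the second case harmless.

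To prove $\ne(\psi_{\M,\N}(M))=\ne(M)$ it suffices, by the inverse symmetry just noted, to prove the inequality $\ne(\psi_{\M,\N}(M))\ge \ne(M)$ and then apply it with the roles of $\M$ and $\N$ reversed. I would take a longest ne-chain $C$ in $M$ and transport it, splitting into cases according to whether $C$ meets the $1$ of $S$, the $1$ of $L$, or neither. A chain meeting neither, or meeting only the long row while avoiding $S$, is handled by the reduction above: it skips $S$ through the exemption and its cells and validity involve only the unchanged rows, so the corresponding chain is valid and of equal length in $\psi_{\M,\N}(M)$. The remaining chains use the cell lying in the short row, which is precisely where the column word and the submatrix condition interact; here I would use the unchanged-matrix case together with $L$'s overhang to reroute $C$ into an equally long admissible chain.

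The step I expect to be the main obstacle is exactly this rerouting. In the unchanged-matrix case the $1$-cells of $M$ and $\psi_{\M,\N}(M)$ sit at identical coordinates and only the ambient polyomino changes, which relocates the height at which a wide horizontal span is blocked; consequently a longest chain of $M$ can fail the submatrix condition in $\psi_{\M,\N}(M)$ even though the column word is untouched, and one must produce a different chain of the same length whose rectangle avoids the short row at its occupied cell, typically by detouring the span through $L$'s overhang and invoking the exemption to skip $S$. Carrying out this substitution uniformly over all sub-configurations -- which of $S$ and $L$ is on top, and on which side $L$ overhangs -- is the technical heart of the argument; the case analysis is finite but must be organized so that the produced chain is simultaneously increasing, of full length, and admissible.
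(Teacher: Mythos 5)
Your overall architecture---pair each filling with a partner obtained by a local move on the two swapped rows and transport chains---resembles the paper's, but the map you actually define is not the paper's map, and in its key case it is wrong. The fatal step is the unconditional rule in the case where both rows carry a $1$ and the $1$ of the long row lies in the column support of the short row: there you always output the filling of $\N$ whose $0$--$1$ matrix is literally unchanged (this is $N'=f_{\M,\N}(M)$ in the paper's notation). No amount of the ``rerouting'' you defer to the end can repair this, because $\ne(N')$ can genuinely differ from $\ne(M)$; one must choose between $N'$ and the filling $N$ in which each row carries its own $1$ to its new position, and that choice cannot be made from the local configuration of the two rows alone. This is precisely why the paper proves Lemma \ref{lemma3-1} (resting on the technical Lemma \ref{lem11}): it guarantees $\ne(M)\in\{\ne(N),\ne(N')\}$, and the map is then defined non-locally, as $\psi_{\M,\N}(M)=N$ when $\ne(M)=\ne(N)$ and $\psi_{\M,\N}(M)=N'$ otherwise. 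Your symmetry plan (``prove $\ne(\psi_{\M,\N}(M))\ge\ne(M)$ and apply it with the roles reversed'') collapses on the same example below, since one direction of that inequality is simply false for your map.

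Concretely, write cells as (column, row) with rows numbered from the bottom. Let $\M$ have rows, from bottom to top, $R_1$ (columns $1$--$5$), $\R_l$ (columns $1$--$5$), $\R_s$ (columns $2$--$4$), $R_4$ (columns $1$--$5$), and let $\N$ be obtained by swapping the adjacent rows $\R_s,\R_l$; both are almost-moon polyominoes whose only exceptional row is $\R_s$, so the hypotheses of Theorem \ref{thm2} hold. Let $M$ have $1$'s at $(3,1)$, $(2,2)$ (in $\R_l$), $(4,3)$ (in $\R_s$), $(5,4)$, so every row sum is $1$. The $1$ of $\R_l$ sits in column $2\in[2,4]$, so your rule outputs the matrix-unchanged filling $N'$. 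In $M$, the only increasing sequences of length $3$ are $(3,1),(4,3),(5,4)$ and $(2,2),(4,3),(5,4)$; each uses the cell $(4,3)$ in the short row $\R_s$, so the submatrix condition demands column $5$ in $\R_s$, which is absent, and the exceptional-row exemption is unavailable because the chain meets $\R_s$. Hence $\ne(M)=2$. In $N'$, however, the same cells $(3,1),(4,3),(5,4)$ lie in rows $R_1,\R_l,R_4$ of $\N$, all of full length, and the submatrix condition no longer involves $\R_s$ at all, so they form a valid $3$-chain: $\ne(N')=3\neq\ne(M)$. (Consistently with Lemma \ref{lemma3-1}, the other candidate $N$, with $1$'s at $(3,1),(4,2),(2,3),(5,4)$ in $\N$, has $\ne(N)=2$, so the paper's map chooses $N$ here, not $N'$.) The same example breaks your inverse-symmetry step: your map sends $N'$ back to $M$, yet $\ne(M)=2<3=\ne(N')$, so the inequality $\ne(\psi_{\N,\M}(\cdot))\ge\ne(\cdot)$ that your argument relies on is false.
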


By the same discussion after Theorem \ref{thm1}, we get the following corollary.

\begin{cor} \label{cor2} 
 Let $\mathcal{M}$ be an almost-moon polyomino with $k$ rows and $\ell$ columns, $\sigma \in S_k$ be
 a permutation of the row indices such that $\sigma\M$  is also an almost-moon polyomino. 
 Let $\textbf{r} \in \{0,1\}^k$ and $\mathbf{c} \in \mathbb{N}^l$.  Then there
 is a bijection $ \psi: \F(\M, \textbf{r}, \textbf{c}) \longrightarrow \F(\sigma \M, \sigma\textbf{r}, \textbf{c})$ 
such that $\ne(\psi (M)) = \ne(M)$ for $M \in \F(\M, \textbf{r}, \textbf{c})$. 
Moreover, the size of $\{ M: M \in \F(\M,\textbf{r}, \textbf{c}), \ne(M) =k \}$ depends only on the 
sequence  of lengths of the columns of $\M$. 
\end{cor}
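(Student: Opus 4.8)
The plan is to assemble $\psi$ from the single-swap maps of Theorem~\ref{thm2}, exactly as Corollary~\ref{cor1} was assembled from Theorem~\ref{thm1}, but now carrying along the row-sum bookkeeping. By the connectivity observation recorded after Theorem~\ref{thm1}, I can factor $\sigma$ into adjacent transpositions $\sigma=\tau_m\cdots\tau_1$ so that every intermediate polyomino $\M=\M_0,\M_1,\dots,\M_m=\sigma\M$ is an almost-moon polyomino whose only possible exceptional row is one of the two rows being swapped at that step. This is precisely the hypothesis of Theorem~\ref{thm2}, so the $i$-th swap yields a map $\psi_{\M_{i-1},\M_i}$, and I would set $\psi:=\psi_{\M_{m-1},\M_m}\circ\cdots\circ\psi_{\M_0,\M_1}$. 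Each factor preserves $\ne$, preserves the column sums $\textbf{c}$, and merely interchanges the two row-sum coordinates attached to the rows it swaps; the output row sum of one step is then exactly the input row sum required by the next, so the composite is well defined, fixes $\textbf{c}$, sends $\textbf{r}$ to $\sigma\textbf{r}$, and satisfies $\ne(\psi(M))=\ne(M)$.

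To promote $\psi$ to a bijection I would verify that each single-swap map is invertible, its inverse being the map $\psi_{\M_i,\M_{i-1}}$ produced by applying Theorem~\ref{thm2} to the reverse interchange; concretely I expect the construction of Section~4 to satisfy $\psi_{\M_i,\M_{i-1}}\circ\psi_{\M_{i-1},\M_i}=\mathrm{id}$, in analogy with the identity $\phi_{\N,\M}\circ\phi_{\M,\N}=1_{\F(\M)}$ of Theorem~\ref{thm1}. Granting this, $\psi$ has the two-sided inverse $\psi_{\M_1,\M_0}\circ\cdots\circ\psi_{\M_m,\M_{m-1}}$ and is therefore a bijection. I expect the verification of this reversibility to be the one genuinely non-formal point of the corollary, since Theorem~\ref{thm2} is phrased only as the existence of a map; everything else is bookkeeping built on top of that theorem.

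For the final assertion I would argue, as in the proof of Corollary~\ref{cor1}, by reducing to a canonical representative, except that here only row permutations are available: transposing would turn the column sums $\textbf{c}$ into row sums, and since $\textbf{c}$ may have entries larger than $1$ the at-most-one-$1$-per-row hypothesis of Theorem~\ref{thm2} would fail, so the transpose step used for Corollary~\ref{cor1} is unavailable. The key point is that interchanging two rows does not alter the sequence of column lengths, because the length of column $j$ counts the rows whose supporting interval contains $j$, a number independent of the vertical order of the rows. Conversely, an almost-moon polyomino is recovered, up to a permutation of its rows, from its sequence of column lengths, so any two almost-moon polyominoes with the same column-length sequence are related as $\M$ and $\sigma\M$. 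Reordering the rows of $\M$ into a canonical length-descending moon polyomino $\M_1$ via the first part then carries $\{M\in\F(\M,\textbf{r},\textbf{c}):\ne(M)=k\}$ bijectively onto the corresponding set for $\M_1$ with the induced row sum; since $\M_1$ depends only on the column-length sequence, so does the cardinality in question. The remaining thing to nail down is this converse determination of the polyomino from its column-length sequence, which I would deduce from the intersection-free comparability condition on the rows.
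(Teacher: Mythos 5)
Your proposal is correct and follows essentially the same route as the paper: compose the single-swap maps $\psi_{\M_{i-1},\M_i}$ of Theorem~\ref{thm2} along the adjacent-transposition factorization guaranteed by the connectivity discussion after Theorem~\ref{thm1}, and for the final assertion reduce to the canonical descending-row-length polyomino $\M_1$, whose shape is determined by the column-length sequence. The invertibility you flag as the one non-formal point is indeed supplied by the paper's construction, since Section~4 notes explicitly that $\psi_{\M,\N}$ and $\psi_{\N,\M}$ are inverse to each other.
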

\begin{proof} 
 To see the second part of the statement, note that exchanging adjacent rows allows us to transform $\M$ to $\M_1$, the 
 polyomino with descending row lengths, whose shape is determined by the sequence  of lengths of the columns of $\M$. 
 \end{proof}



\section{Maximal increasing sequences in 01-fillings with fixed total sum } 

In this section we will describe the maps $\phi_{\M, \N}$ and prove Theorem~\ref{thm1}. To this end, let
 $\M$ and $\N$ be two almost-moon polyominoes related by an interchange of two adjacent rows (e.g.
 Figure~\ref{fig:moonexample}). Assume that $\M$ and $\N$ have no exceptional rows other than the swapped ones. 
 If the swapped rows are of equal length then  $\M = \N$ and we define $\phi_{\M, \N}$ 
to be the identity map on $\F(\M)$. 

Otherwise, suppose the lengths of the swapped rows are not equal and let $\R_s$ and $\R_l$, respectively, be the 
shorter and longer rows. Note that $\M \backslash \R_s $ and
  $\N \backslash \R_s $ have no exceptional rows.  Let $\alpha, \beta, \gamma, \delta$ be the fillings of the 
regions of $\R_s$ and $\R_l$ in $M$ as depicted on the left side of Figure~\ref{fig:f_example}. Precisely,
 $\alpha$ is the filling of the shorter row $\R_s$, $\beta$ is the filling of the part of the longer 
row $\R_l$ which has the same column support as row $\R_s$, and $\gamma$ and $\delta$ are the fillings of 
the two ends of row $\R_l$ so that the whole filling of row $\R_l$ viewed as a binary string is a
 concatenation of $\gamma$, $\beta$, and $\delta$. Note that one of $\gamma$ and $\delta$ may be the
 empty string. First we define a map $f_{\M, \N} : \F(\M) \longrightarrow \F(\N)$ as follows. 

 \begin{defi}\label{f_definition}  Using the notation above, $f_{\M,\N}(M)$ is the filling of $\N$ in which 
\begin{enumerate}[(1)]
\item the filling of the rows in $\N$ other than $\R_s$ and $\R_l$ is the same as in $M$,
\item the filling of the shorter row $\R_s$ of $\N$ is $\beta$,
\item the filling of the longer row $\R_l$ of $\N$ is the concatenation of $\gamma$, $\alpha$, and $\delta$.
\end{enumerate}
\end{defi} 
See Figure~\ref{fig:f_example} for an illustration.

\begin{figure}[ht]
\centering
  \begin{tikzpicture}
  \draw (0.5, 2) -- (3.5,2);
   \draw (0.5, 2.5) -- (3.5,2.5);
    \draw (1.5, 3) -- (3,3);
    \draw (0.5, 2) --(0.5, 2.5);
   \draw (1.5, 2) -- (1.5,3);
   \draw (3, 2) -- (3, 3);
   \draw (3.5, 2) -- (3.5, 2.5);
   \draw (2.25, 2.75) node {$\alpha$};
    \draw (2.25, 2.25) node {$\beta$};
     \draw (1, 2.25) node {$\gamma$};
     \draw (3.25, 2.25) node {$\delta$};
   \draw[densely dotted] (0.5, 2) .. controls(0,1) and (0.5,0.5) .. (1.5,0.5) ;
\draw[densely dotted] (1.5,0.5) .. controls (2,0.5) and (2.25, 1)  .. (3,0.5);
\draw[densely dotted] (3,0.5) .. controls (3.5, 0.5) and (4, 1) ..  (3.5, 2);
\draw[densely dotted] (1.5,3) -- (0.5,3);
\draw[densely dotted] (0.5,3) .. controls (0, 3.5) and (0, 4) .. (1.25, 4.5);
\draw[densely dotted] (1.25,4.5) .. controls (2, 4.75) and (2.5, 4.5) .. (3, 4.25);
\draw[densely dotted] (3,4.25) .. controls (3.5, 3.75) and (4, 3.3) .. (3.5,3);
\draw[densely dotted] (3,3) -- (3.5,3);
\draw[->] (-0.75, 2.75) node[left] {$\mathcal{R}_s$} -- (0.25, 2.75);
\draw[->] (-0.75, 2.25) node[left] {$\R_l$} -- (0.25, 2.25);

\draw[|->, xshift=5cm] (-0.25, 2.5) -- (1.75, 2.5) node[midway,above] {$f_{\M,\N}$};

  \draw[xshift = 7.5cm] (0.5, 3) -- (3.5,3);
   \draw[xshift = 7.5cm] (0.5, 2.5) -- (3.5,2.5);
    \draw[xshift = 7.5cm] (1.5, 2) -- (3,2);
    \draw[xshift = 7.5cm] (0.5, 2.5) --(0.5, 3);
   \draw[xshift = 7.5cm] (1.5, 2) -- (1.5,3);
   \draw[xshift = 7.5cm] (3, 2) -- (3, 3);
   \draw[xshift = 7.5cm] (3.5, 2.5) -- (3.5, 3);
   \draw[xshift = 7.5cm] (2.25, 2.75) node {$\alpha$};
    \draw[xshift = 7.5cm] (2.25, 2.25) node {$\beta$};
     \draw[xshift = 7.5cm] (1, 2.75) node {$\gamma$};
     \draw[xshift = 7.5cm] (3.25, 2.75) node {$\delta$};
\draw[densely dotted, xshift=7.5cm] (0.5, 2) .. controls(0,1) and (0.5,0.5) .. (1.5,0.5) ;
\draw[densely dotted, xshift=7.5cm] (1.5,0.5) .. controls (2,0.5) and (2.25, 1)  .. (3,0.5);
\draw[densely dotted, xshift=7.5cm] (3,0.5) .. controls (3.5, 0.5) and (4, 1) ..  (3.5, 2);
\draw[densely dotted, xshift=7.5cm] (1.5,2) -- (0.5,2);
\draw[densely dotted, xshift=7.5cm] (0.5,3) .. controls (0, 3.5) and (0, 4) .. (1.25, 4.5);
\draw[densely dotted, xshift=7.5cm] (1.25,4.5) .. controls (2, 4.75) and (2.5, 4.5) .. (3, 4.25);
\draw[densely dotted, xshift=7.5cm] (3,4.25) .. controls (3.5, 3.75) and (4, 3.3) .. (3.5,3);
\draw[densely dotted, xshift=7.5cm] (3,2) -- (3.5,2);
\draw[->, xshift=7.5cm] (4.75, 2.75) node[right] {$\R_l$} -- (3.75, 2.75);
\draw[->, xshift=7.5cm] (4.75, 2.25) node[right] {$\mathcal{R}_s$} -- (3.75, 2.25);
\end{tikzpicture}
\caption{The fillings $M$ and $f_{\M, \N}(M)$ differ only in the rows $\R_s$ and $\R_l$.}
\label{fig:f_example}
\end{figure}

It follows directly from the definition.
\begin{lem}  \label{f_prop} $f_{\N,\M} \circ f_{\M,\N} =1_{\F(\M)}.$
\end{lem}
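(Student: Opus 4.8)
The plan is to unwind Definition~\ref{f_definition} directly and verify that two successive applications return the original filling row by row. The only structural fact I need at the outset is that interchanging the two adjacent rows does not alter the column support of either row: each row is a connected (row-convex) set of cells occupying a fixed interval of columns, and swapping the two rows vertically leaves these horizontal intervals untouched. Consequently the shorter row $\R_s$ and the longer row $\R_l$ have the same column supports in $\M$ as in $\N$, $\R_s$ remains the shorter of the two, and the decomposition of the longer row into its left end, its middle part (the columns shared with $\R_s$), and its right end uses exactly the same column boundaries for both polyominoes. This is the one point that genuinely needs to be checked; I expect it to be the main (and only mild) obstacle, since the rest is bookkeeping.

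First I would fix the notation of the excerpt: for $M \in \F(\M)$, write $\alpha$ for the filling of $\R_s$ and $\gamma\beta\delta$ for the filling of $\R_l$, where $\beta$ occupies the columns shared with $\R_s$ and $\gamma,\delta$ are the two ends. By Definition~\ref{f_definition}, the filling $M' := f_{\M,\N}(M) \in \F(\N)$ agrees with $M$ on all rows other than $\R_s$ and $\R_l$, has $\beta$ on the shorter row $\R_s$, and has $\gamma\alpha\delta$ on the longer row $\R_l$.

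Next I would apply $f_{\N,\M}$ to $M'$, reading off the four pieces relative to $M'$. Using the column-support invariance established above, the shorter row of $M'$ carries the string $\beta$ (this now plays the role of $\alpha$ in the definition), while the longer row carries $\gamma\alpha\delta$, whose middle part over the columns of $\R_s$ is exactly $\alpha$ and whose ends are again $\gamma$ and $\delta$. Applying Definition~\ref{f_definition} once more therefore places $\alpha$ on the shorter row and $\gamma\beta\delta$ on the longer row, while all remaining rows are unchanged. This is precisely $M$, so $f_{\N,\M}(f_{\M,\N}(M)) = M$ for every $M \in \F(\M)$, which is the claim. The argument is entirely symmetric in $\M$ and $\N$, so no separate verification of the reverse composition is required.
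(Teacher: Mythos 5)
Your proposal is correct and matches the paper's approach: the paper simply states that the lemma ``follows directly from the definition,'' and your argument is exactly that direct unwinding, with the (true and worth noting) observation that the interchange preserves the column supports of $\R_s$ and $\R_l$, so the $\gamma$/middle/$\delta$ decomposition is the same in $\M$ and $\N$. Nothing further is needed.
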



 If it is clear what the polyominoes $\M$ and $\N$ are, we will leave out the subscripts and write only $f(M)$. 

\begin{lem}\label{lem1} If the almost-moon polyominoes $\M$ and $\N$ are related by an interchange of the 
rows $\R_s$ ans $\R_l$ as above, then for every $M \in \F(\M)$, $$\left| \ne(M) - \ne(f_{\M, \N}(M)) \right| \leq 1.$$
\end{lem}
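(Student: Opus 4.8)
The plan is to establish the single inequality $\ne(f_{\M,\N}(M))\ge \ne(M)-1$ for every admissible pair $(\M,\N)$ and every $M\in\F(\M)$, for \emph{both} possible orders of the two interchanged rows, and then to obtain the reverse inequality for free. Indeed, applying this bound to the pair $(\N,\M)$ and to the filling $f_{\M,\N}(M)\in\F(\N)$, and using the involution $f_{\N,\M}\circ f_{\M,\N}=1_{\F(\M)}$ of Lemma~\ref{f_prop}, gives $\ne(M)=\ne\bigl(f_{\N,\M}(f_{\M,\N}(M))\bigr)\ge \ne(f_{\M,\N}(M))-1$. The two bounds together are exactly $\left|\ne(M)-\ne(f_{\M,\N}(M))\right|\le 1$.

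The first step is a cell-by-cell comparison of the two fillings using Definition~\ref{f_definition}. Let $[p,q]$ be the column support of the shorter row $\R_s$ and $[p',q']\supseteq[p,q]$ that of the longer row $\R_l$, so that $\gamma$ and $\delta$ occupy the end columns $[p',p-1]$ and $[q+1,q']$. One checks that $M$ and $f_{\M,\N}(M)$ agree on the overlap columns $[p,q]$ (carrying $\alpha$ and $\beta$ in the two rows in both fillings), while in the end columns the single $1$-cells of $\gamma$ and $\delta$ are shifted by exactly one row, from the position $\R_l$ occupies in $\M$ to the one it occupies in $\N$. In short, $f_{\M,\N}$ moves the $1$'s of $\gamma$ and $\delta$ by one row and fixes every other cell; this is the only change, and it does not depend on which of the two rows sits on top.

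Next, take a ne-chain $C$ in $M$ of maximal size $k=\ne(M)$ and extract from it a ne-chain of size at least $k-1$ in $f_{\M,\N}(M)$. Since the rows of a ne-chain strictly increase, $C$ has at most one cell in $\R_s$ and at most one cell in $\R_l$; call them $x$ and $y$. Of all cells of $C$, only $y$ can be displaced, and only when $y$ lies in $\gamma\cup\delta$; in that case $f_{\M,\N}$ shifts it by one row, where it may collide with $x$. The recipe is to keep $C$ as it is when nothing moves, and otherwise to delete at most one cell. Running through the cases according to whether $x$ and $y$ are present and whether $y$ lies in $\beta$ or in $\gamma\cup\delta$, I would show that the repaired set is a valid ne-chain in $f_{\M,\N}(M)$ of size $k$ or $k-1$.

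The step I expect to be the main obstacle is verifying the submatrix (rectangle/almost-rectangle) containment condition for the repaired set while sacrificing at most one cell. For this I would use the reformulation that a set of $1$-cells with strictly increasing rows and columns is a ne-chain if and only if every column of the set lies in the support of every row of the set, each row being an interval and the exceptional row (which carries no chain cell) never entering the condition. Under $f_{\M,\N}$ exactly one of the two interchanged rows has its support enlarged, to $[p',q']$, and the other contracted, to $[p,q]$. The controlling fact is that a retained chain cell in the enlarged row comes either from $x$, whose row in $M$ had the smaller support $[p,q]$, or from the displaced $y$, whose row $\R_l$ in $M$ had support $[p',q']$; in either case all surviving columns lie in $[p',q']$, so the enlarged row poses no obstruction. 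The only possible obstruction is then at the contracted row, caused by a chain column in the $\gamma$ or $\delta$ range, and it is cleared by deleting that row's single chain cell. Since this obstruction requires $y\in\beta$ while the collision requires $y\in\gamma\cup\delta$, the two defects never occur together, so one deletion always suffices; this gives $\ne(f_{\M,\N}(M))\ge \ne(M)-1$, and with the reduction of the first paragraph, the lemma.
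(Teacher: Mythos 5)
Your proposal is correct and takes essentially the same route as the paper's proof: transfer a maximal chain of $M$ into $f_{\M,\N}(M)$, noting it meets each of $\R_s,\R_l$ in at most one cell, delete at most one cell (the cell in $\beta$, when its row's contracted support causes an obstruction) to obtain a chain of size at least $\ne(M)-1$, and then get the reverse inequality by exchanging the roles of $\M$ and $\N$ --- your involution argument via Lemma~\ref{f_prop} is exactly what the paper means by ``switching the roles of $M$ and $f(M)$.'' Your explicit verification of the submatrix condition through the enlarged/contracted-row dichotomy and the mutual exclusivity of the two defects is a more detailed write-up of what the paper's four-case analysis asserts implicitly, not a different method.
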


\begin{proof} Let $C$ be a $k$-chain in $M$.
\begin{itemize}
\item[(i)] If $C$ contains no 1-cells from $\R_s \cup \R_l$ then $C$ is a $k$-chain in $f(M)$.
\item[(ii)] If $C$ contains a 1-cell from $\alpha$ then $C$ is a $k$-chain in $f(M)$.
\item[(iii)] If $C$ contains a 1-cell $b_0$ from $\beta$ then $C-\{b_0\}$ is a $(k-1)$-chain in $f(M)$.
\item[(iv)] If $C$ contains a 1-cell from $\gamma$ or $\delta$ then $C$ is a $k$-chain in $f(M)$.
\end{itemize}
Therefore, $\ne(f(M)) \geq \ne(M)-1$. By switching the roles of $M$ and $f(M)$ with a similar analysis, we get $\ne(M) \geq \ne(f(M))-1$. 
\end{proof}
Therefore, every filling $M \in \F(\M)$ satisfies exactly one of the following 3 conditions:

\begin{center} $(\text{I}) \;\;\; \ne(f(M)) = \ne(M) \hspace{1cm} (\text{II}) \;\;\;\ne(f(M)) =
 \ne(M) +1 \hspace{1cm} (\text{III}) \;\;\; \ne(f(M)) = \ne(M) -1$.\end{center}
\noindent Let $\F^I(\M)$, $\F^{II}(M)$, and $\F^{III}(\M)$, be the fillings of $\M$ that satisfy the
 conditions~(I), (II), and (III), respectively.  Below we describe how $\phi_{\M,\N}(M)$ is defined on each of these three sets.

\fbox{\textbf{Case I.}} For $M \in \F^I(\M)$ we define $\phi_{\M, \N}(M) = f_{\M, \N}(M)$. It is clear
 that this defines a bijection from $\F^I(\M)$ onto $\F^I(\N)$.

\fbox{\textbf{Case II.}} For  $M \in \F^{II}(\M)$ with $\ne(M)=k$, we have  $\ne(N')=k+1$ where  $N'=f(M)$. 
 Reasoning as in the proof of Lemma~\ref{lem1}, one can see that all $(k+1)$-chains in $N'$ contain
 exactly one cell from $\R_s \cup \R_l$ and that that cell must be in the part $\alpha$ of the longer
 row $\R_l$ of $\N$ (see the right filling in Figure~\ref{fig:f_example}). 

\begin{defi}
 Suppose $M \in \F^{II}(\M)$ and $\ne(f(M)) = \ne(M)+1 = k+1$. The 1-cells in row $\R_l$ of $f(M)$
 which are  part of a $(k+1)$-chain are called \emph{problem cells}.
\end{defi} 

Let $\alpha_0$ be the set of problem cells in $N'=f(M)$ and let $\beta_0$ be the set of cells in row
 $\R_s$ of $N'$ that share a horizontal edge with the problem cells. 
 We define $\phi_{\M, \N}(M)$ to be the filling $N''$ of $\N$ obtained
 by replacing the problem cells in $\alpha_0$ by zeros and the cells in $\beta_0$ by ones. In other 
words, $\phi_{\M,\N}(M)=N''$ is obtained by a vertical shift of the problem cells in $N'$ from row $\R_l$ to row $\R_s$.

\fbox{\textbf{Case III.}} For  $M \in \F^{III}(\M)$, we have  $f_{\M,\N}(M) \in \F^{II}(\N)$. 
In this case we set $\phi_{\M, \N}(M) = f_{\M,\N}(\phi_{\N, \M}(f_{\M, \N}(M)))$. 

\vspace{.3cm}


In the following  we show that $\phi_{\M,\N}$ is a well-defined bijection from $\F(\M,*, \mathbf{c})$ to $\F(\N, *, \mathbf{c})$ 
preserving the statistic $\ne$. Lemmas \ref{beta0}--\ref{lem11}  show that $\phi_{\M,\N}$
 defined in Case II preserves column sum and $\ne(N'')=k$.  Lemma \ref{cyclelemma} shows that $\phi_{\M,\N}$,
 when restricted to $\F^{II}(\M)$, is a bijection onto $\F^{II}(\N)$. Lemma \ref{case3} shows $\phi_{\M,\N}$ 
 restricted to $\F^{III}(\M)$ is a bijection onto $\F^{III}(\N)$.

We first assume that $M$ is a filling in $\F^{II}(\M)$ with $\ne(M)=k$. 
\begin{lem} \label{beta0} 
If $a_0$ in row $\R_l$ of $f(M)$ is a problem cell then the cell $b_0$ in row $R_s$ of $f(M)$ in 
the same column as $a_0$ is a 0-cell.
\end{lem}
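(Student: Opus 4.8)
The plan is to prove Lemma~\ref{beta0} by contradiction: assume that $a_0$ is a problem cell in row $\R_l$ of $N'=f(M)$ and that the cell $b_0$ directly below it in row $\R_s$ is also a $1$-cell, and derive a $(k+1)$-chain in the original filling $M$, contradicting $\ne(M)=k$. Recall that $a_0$ being a problem cell means it lies in the $\alpha$-part of row $\R_l$ and participates in a $(k+1)$-chain $C$ of $N'$. Since $C$ is a $(k+1)$-chain using the cell $a_0$, write $C=\{a_0\}\cup C_-\cup C_+$, where $C_-$ consists of the $1$-cells of $C$ strictly to the southwest of $a_0$ (in rows below $\R_l$) and $C_+$ consists of those strictly to the northeast (in rows above $\R_l$). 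By the analysis in Lemma~\ref{lem1}, $C$ contains exactly one cell from $\R_s\cup\R_l$, namely $a_0$ itself, so $C_-$ and $C_+$ lie entirely in rows common to $\M$ and $\N$, where the fillings of $M$ and $N'$ agree.

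The key observation I would exploit is the relationship between the column supports of $\alpha$ (on $\R_s$) and $\beta$ (on $\R_l$): in both $\M$ and $\N$ the cell $a_0$ and the cell $b_0$ sit in the same column, and the submatrix (rectangle-or-almost-rectangle) condition certifying the $k\times k$ grid for $C$ depends only on the corner cells. First I would argue that if $b_0$ were a $1$-cell in $N'$, then because $N'$ has filling $\beta$ in row $\R_s$ (by Definition~\ref{f_definition}(2)), this same $1$ appears as part of $\beta$, hence in row $\R_l$ of the original $M$ (where $\R_l$ is filled by the concatenation of $\gamma,\beta,\delta$). The goal is then to show that replacing $a_0$ by $b_0$ inside the chain — i.e. using the cell $b_0$ in $M$'s row $\R_l$ in place of $a_0$ — together with the unchanged cells of $C_-$ and $C_+$, yields a genuine $(k+1)$-chain in $M$. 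Since $a_0$ and $b_0$ are in the same column, the northeast/southwest ordering of the chain is preserved, and the submatrix condition for the new chain in $M$ follows from the submatrix condition for $C$ in $N'$ because the governing corner coordinates are unchanged (the swap of $\R_s$ and $\R_l$ only permutes two adjacent rows and does not alter which rectangle the corners span).

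I expect the main obstacle to be verifying carefully that the submatrix (grid-containment) condition genuinely transfers between $N'$ and $M$ after the vertical relocation of the chain's middle cell. Because $\M$ and $\N$ differ by swapping two adjacent rows of unequal length, one must check that the rectangle or almost-rectangle witnessing the $(k+1)$-chain through $a_0$ in $N'$ corresponds, under the row swap, to a valid witnessing region through $b_0$ in $M$; this requires using the hypothesis that $\beta$ occupies exactly the column support shared by $\R_s$ and $\R_l$, so that $b_0$ really does lie in a column where both rows are present in $M$. The cleanest way to handle this is to note that the cells of $C_-$ lie in rows strictly below $\R_l$ and those of $C_+$ in rows strictly above, so their column positions pin down the same corner columns in $M$ as in $N'$; the convexity and comparability of rows in the almost-moon polyomino then guarantee the intermediate rows contain all the needed cells. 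Once the grid condition is established, the contradiction is immediate: $M$ would contain a $(k+1)$-chain, violating $\ne(M)=k$. Therefore $b_0$ must be a $0$-cell.
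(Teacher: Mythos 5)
Your proposal is correct and takes essentially the same route as the paper: the paper's entire proof is the single observation that if $b_0$ were a 1-cell, then $(C-\{a_0\})\cup\{b_0\}$ would be a $(k+1)$-chain in $M$, contradicting $\ne(M)=k$. The extra verification you supply --- that $C_-\cup C_+$ lies in rows shared by $\M$ and $\N$ where the fillings agree, and that the submatrix condition transfers because $\R_l$ keeps its column support under the swap --- is exactly the detail the paper leaves implicit.
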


\begin{proof} Suppose $a_0$ is part of the $(k+1)$-chain $C$ in $f(M)$. If $b_0$ is a 1-cell then 
$(C-\{a_0\})\cup \{b_0\}$ is a $(k+1)$-chain in $M$. 
\end{proof}

Lemma \ref{beta0} shows that in Case II, the cells in $\beta_0$ are 0-cells for $N'$.  Hence  $N''$ is a 01-filling of $\N$ and 
the column sums of $M$ and $N''$ are the same. We will prove next that  $\ne(N'')=k$.

Since $\ne(N')=k+1$, from the way $N''$ is constructed it can readily be seen that $\ne(N'') \geq k$. 
Suppose $\ne(N'') > k$.  Since the ne-chains are preserved under rotation by $180^{\circ}$, in the discussion
 below we can assume, without loss of generality, that in $\M$ the shorter row $\R_s$ is above $\R_l$. This 
will allow us to avoid introducing too much notation, and instead use words such as ``below'', ``above'', etc.   Let $\S$ be the intersection of $\M$ (and therefore $\N$) and the vertical strip determined by the row $\R_s$. Precisely, let

$$\S =\{ (x, y): (x, y) \text{ is a cell in } \M \text{ such that there is a cell } (x, y') \in  \R_s \}.$$

\begin{lem} \label{chainN"} 
Every $(k+1)$-chain in $N''$ contains one 1-cell in each of the rows $\R_s$ and $\R_l$. Thus, 
every such chain is contained in the strip $\S$. 
\end{lem}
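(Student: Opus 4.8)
The plan is to argue by contradiction: assume $\ne(N'')\ge k+1$ and let $D$ be a $(k+1)$-chain of $N''$; I will force $D$ to meet both $\R_s$ and $\R_l$, after which the containment in $\S$ is immediate. Two structural facts will do most of the work. First, the fact established just before the definition of problem cells: every $(k+1)$-chain of $N'$ contains exactly one cell of $\R_s\cup\R_l$, and it lies in the part $\alpha$ of $\R_l$. Second, Lemma~\ref{beta0}, which says the cells of $\beta_0$ are $0$-cells of $N'$. I will repeatedly use that $N'$ and $N''$ are both fillings of $\N$ agreeing outside $\R_s\cup\R_l$, so a chain that lies outside these rows (or uses only cells carrying $1$'s in both fillings) is simultaneously a chain of $N'$ and of $N''$. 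Since a ne-chain meets any row at most once, $D$ meets $\R_s$ and $\R_l$ at most once each, so it suffices to show it meets each at least once. As in the text, I assume $\R_s$ lies above $\R_l$.

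The first step is that $D$ meets $\R_s$. If it did not, every cell of $D$ would lie outside $\R_s$, hence be either outside $\R_s\cup\R_l$ (where $N''$ agrees with $N'$) or a $1$-cell of $\R_l$ in $N''$, i.e.\ a non-problem cell, which is therefore a $1$-cell of $N'$ as well. Then $D$ would be a $(k+1)$-chain of $N'$, and the $N'$-characterization would make its unique cell of $\R_s\cup\R_l$ a problem cell of $\R_l$ — contradicting that this cell carries a $1$ in $N''$ while problem cells are zeroed there. So $D$ has exactly one cell $c\in\R_s$. The same lifting shows $c$ cannot be an original $1$-cell of $\R_s$ (otherwise $D$ would again be a $(k+1)$-chain of $N'$, now using a cell of $\R_s$, which the characterization forbids); hence $c\in\beta_0$, and the cell $c'$ directly above $c$ in $\R_l$ is a problem cell lying in $\alpha$, so $c$ and $c'$ share a column $q$ in the column support of $\R_s$.

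The crux is ruling out $D\cap\R_l=\emptyset$. The key observation is that, because $D$ already uses a cell of $\R_s$, the submatrix condition forces \emph{every} column of $D$ into the column support of $\R_s$; in particular $D\subseteq\S$. Now set $D'=(D\setminus\{c\})\cup\{c'\}$. Since $c'$ sits one row above $c$ in the same column, $D'$ is still strictly increasing, and it is a genuine $(k+1)$-chain of $N'$: replacing an $\R_s$-cell by the $\R_l$-cell above it preserves the submatrix condition because the column support of $\R_s$ is contained in that of $\R_l$, and the bounding box is unchanged. I then transport $D'$ to $M$. Recalling that $N'=f(M)$ differs from $M$ only by a vertical shift of the $\gamma,\delta$ cells, and that $c'$ lies in the aligned region $\alpha$ (column $q$), every cell of $D'$ sits at the same position with the same value in $M$ as in $N'$. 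In $M$ the cell $c'$ belongs to $\R_s$; since all columns of $D'$ lie in the support of $\R_s$, the pairs of the submatrix involving that row are present in $\M$, and the remaining pairs already held for $D$ in $\N$ and are unaffected by the swap (the other rows have identical support in $\M$ and $\N$). Thus $D'$ is a $(k+1)$-chain of $M$, contradicting $\ne(M)=k$.

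Therefore $D$ meets each of $\R_s$ and $\R_l$ in exactly one cell, and the containment in $\S$ follows immediately: once $D$ uses a cell of $\R_s$, the submatrix condition confines all columns of $D$ to the support of $\R_s$. I expect the only delicate point to be the verification that the rerouted chain $D'$ obeys the submatrix (rectangle / almost-rectangle) condition in $\M$ — together with the degenerate cases where $c$ is the lowest or highest cell of $D$, where one must check the bounding box only shrinks. This is precisely where the nested supports and the confinement $D\subseteq\S$ are doing the work.
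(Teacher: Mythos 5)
Your proposal is correct and takes essentially the same approach as the paper: the easy cases (a chain missing $\R_s$, or meeting it in an original $1$-cell of $N'$) are eliminated by lifting the chain to $N'$ and invoking the problem-cell characterization, while the crucial case --- a chain through a $\beta_0$-cell $c$ that misses $\R_l$ --- is handled by the paper's exact construction, replacing $c$ with the problem cell directly above it and transporting the resulting chain to $M$ to contradict $\ne(M)=k$. The only cosmetic difference is that the paper dispatches two of the easy cases by moving the chain directly into $M$ rather than through the characterization of $(k+1)$-chains in $N'$; both arguments are sound.
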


\begin{proof} Since $\ne(N'') > k$, $N''$ contains a $(k+1)$-chain $C$. If none of the elements of $C$ is 
in $\R_s \cup \R_l$, then $C$ is a chain in $M$, which yields a contradiction with $\ne(M)=k$. If $C$ contains
 a 1-cell $c$ from $\R_l$ but  not a cell from $\R_s$ then $C$ is a $(k+1)$-chain in $N'$ also, which means 
that $c$ is a problem cell, which contradicts the fact that during the construction of $N''$ all problem 
cells in $N'$ are replaced by 0's. If $C$ contains a 1-cell $c$ from $\R_s$ but  not a cell from $\R_l$ 
then there are 2 possible cases: (i) $c$ is a 1-cell in $N'$. In this case $C$ is a $(k+1)$-chain in $M$ 
as well. This contradicts $\ne(M)=k$. (ii) $c$ is a 0-cell in $N'$. In this case the cell $d$  above $c$ 
in row $\R_l$ is a problem 1-cell and $C$ is contained in the strip $\S$. Then $(C-\{c\}) \cup \{d\}$ is 
a $(k+1)$-chain in $M$. This again  contradicts   $\ne(M)=k$. \end{proof}

Let $y_1\text{---}  c \text{---} b \text{---} x_1$ be a $(k+1)$-chain in $N''$, where $c$ and $b$ are the 
1-cells in $\R_s$ and $\R_l$ respectively, $y_1$ is the part of the chain which is southwest of $c$ and $x_1$ 
is the part of the chain which is northeast of $b$. Note that, since $b$ is a 1-cell in $N''$, it is also
 a 1-cell in $N'$ but it is not a problem cell.

\begin{lem} \label{cainN'} 
The cell $c$ is a 0-cell in $N'$ and therefore the cell $a$ in row $\R_l$ directly above $c$ is  a problem cell. 
\end{lem}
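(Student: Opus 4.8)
The plan is to prove both assertions by contradiction against a single structural fact: as observed in the Case~II discussion, every $(k+1)$-chain of $N'$ meets $\R_s\cup\R_l$ in \emph{exactly one} cell (lying in the $\alpha$-part of $\R_l$). Since the chain $y_1\text{---}c\text{---}b\text{---}x_1$ already supplies one cell $b\in\R_l$, and, if $c$ were a $1$-cell of $N'$, would supply a second cell $c\in\R_s$, the whole strategy is to promote this chain from $N''$ to a genuine $(k+1)$-chain of $N'$ and thereby read off the forbidden configuration of two cells in $\R_s\cup\R_l$.

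First I would set up the bookkeeping of where $N'$ and $N''$ disagree. By construction $N''$ is obtained from $N'$ by flipping the problem cells of $\R_l$ from $1$ to $0$ and the cells of $\beta_0$ directly below them (in $\R_s$) from $0$ to $1$; hence $N'$ and $N''$ agree at every cell outside $\R_s\cup\R_l$. By Lemma~\ref{chainN"} the chain $y_1\text{---}c\text{---}b\text{---}x_1$ meets $\R_s\cup\R_l$ only in $c$ and $b$, so every cell of $y_1$ and of $x_1$ has the same value in $N'$ as in $N''$ and is therefore a $1$-cell of $N'$. Moreover $b$, being a non-problem $1$-cell of $\R_l$ in $N''$, is a $1$-cell of $N'$ as well, as already noted before this lemma.

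Now suppose, for contradiction, that $c$ is a $1$-cell of $N'$. Then all of $y_1,\,c,\,b,\,x_1$ are $1$-cells of $N'$; since $N'$ and $N''$ are fillings of the \emph{same} polyomino $\N$, the underlying set of positions still satisfies the submatrix condition, so $y_1\text{---}c\text{---}b\text{---}x_1$ is a genuine $(k+1)$-chain of $N'$. But it contains the two cells $c\in\R_s$ and $b\in\R_l$, contradicting that every $(k+1)$-chain of $N'$ meets $\R_s\cup\R_l$ in exactly one cell. Hence $c$ is a $0$-cell of $N'$. Finally, since $c$ is $0$ in $N'$ but $1$ in $N''$, and the only $0\to1$ flips in passing from $N'$ to $N''$ occur at the cells of $\beta_0$, we get $c\in\beta_0$, so by definition of $\beta_0$ the cell $a$ of $\R_l$ directly above $c$ is a problem cell.

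I do not expect a genuine obstacle: the argument is entirely bookkeeping about the disagreement set of $N'$ and $N''$ combined with the one-cell property of $(k+1)$-chains in $N'$. The only point needing care is the word ``above'': under the normalization that $\R_s$ lies above $\R_l$ in $\M$, the swap puts $\R_l$ above $\R_s$ in $\N$, so in $N'$ the problem cells of $\R_l$ indeed sit directly above their partners in $\beta_0\subseteq\R_s$, which is precisely what makes $a$ ``directly above'' $c$.
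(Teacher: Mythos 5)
Your proof is correct and takes essentially the same approach as the paper: assume $c$ is a 1-cell of $N'$, promote $y_1\text{---}c\text{---}b\text{---}x_1$ to a genuine $(k+1)$-chain of $N'$, and derive a contradiction. The only (cosmetic) difference is the final step — the paper concludes that $b$ would then be a problem cell and hence a 0-cell of $N''$, whereas you invoke the exactly-one-cell-in-$\R_s\cup\R_l$ property of $(k+1)$-chains in $N'$ from the Case II discussion; the two contradictions are interchangeable, and your explicit bookkeeping of where $N'$ and $N''$ agree merely spells out what the paper leaves implicit.
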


\begin{proof} If $c$ is a $1$-cell in $N'$ then $y_1\text{---}  c \text{---} b \text{---} x_1$ is a $(k+1)$-chain 
in $N'$ as well, which implies that $b$ is a problem cell. But then $b$ would be a 0-cell in $N''$ and cannot be a part 
of a $(k+1)$-chain in $N''$.
\end{proof}

Therefore, the cell $a$ is a part of a $(k+1)$-chain $y_2 \text{---} a \text{---} x_2$ in $N'$. Moreover, this 
chain is not contained in the strip $\S$. Otherwise, $y_2 \text{---} a \text{---} x_2$ is a $(k+1)$-chain in $M$ 
as well. We will show that this implies a contradiction, as stated in Lemma \ref{lem11}.  
Figure~\ref{fig:twochains} illustrates the positions of the two chains.

\begin{figure}[ht]
\centering

  \begin{tikzpicture}

  \draw (0.5, 3) -- (3.5,3);
   \draw(0.5, 2.5) -- (3.5,2.5);
    \draw (1.5, 2) -- (3,2);
    \draw (0.5, 2.5) --(0.5, 3);
   \draw (1.5, 2) -- (1.5,3);
   \draw (3, 2) -- (3, 3);
   \draw (3.5, 2.5) -- (3.5, 3);
   \draw (2, 2.75) node[font=\small] {$a$};
   \draw (2.35, 2.78) node[font=\small] {$b$};
    \draw (2, 2.25) node[font=\small] {$c$};
    \draw[dashed] (1.5,0.5) -- (1.5,4.5);
    \draw[dashed] (3,0.5) -- (3,4.3);
     \draw (3, 1) node[left] {$\S$};
    \draw (1.6, 1)-- (2, 2) node[font=\small, midway, right] {$y_1$} ;
    \draw (0.6, 1.4) node[font=\small, below] {$y_2$} -- (2, 2) ;
    \draw (2.35, 3)-- (2.75, 4) node[font=\small, midway, right] {$x_1$} ;
    \draw (2, 3)-- (2.25, 4.4) node[font=\small, midway, left] {$x_2$} ;
\draw[densely dotted] (0.5, 2) .. controls(0,1) and (0.5,0.5) .. (1.5,0.5) ;
\draw[densely dotted] (1.5,0.5) .. controls (2,0.5) and (2.25, 1)  .. (3,0.5);
\draw[densely dotted] (3,0.5) .. controls (3.5, 0.5) and (4, 1) ..  (3.5, 2);
\draw[densely dotted] (1.5,2) -- (0.5,2);
\draw[densely dotted] (0.5,3) .. controls (0, 3.5) and (0, 4) .. (1.25, 4.5);
\draw[densely dotted] (1.25,4.5) .. controls (2, 4.75) and (2.5, 4.5) .. (3, 4.25);
\draw[densely dotted] (3,4.25) .. controls (3.5, 3.75) and (4, 3.3) .. (3.5,3);
\draw[densely dotted] (3,2) -- (3.5,2);
\draw[->] (4.75, 2.75) node[right] {$\R_l$} -- (3.75, 2.75);
\draw[->] (4.75, 2.25) node[right] {$\mathcal{R}_s$} -- (3.75, 2.25);

\draw (0,4) node[left]{$\N$};
\end{tikzpicture}
\caption{The $(k+1)$-chain $y_1\text{---}  c \text{---} b \text{---} x_1$   in $N''$ is contained in the 
strip $\S$. The $(k+1)$-chain $y_2 \text{---} a \text{---} x_2$ in $N'$ is not contained in $\S$. 
In $N': a=1, b=1, c=0$, in $N'': a= 0, b=1, c=1$.}
\label{fig:twochains}
\end{figure}
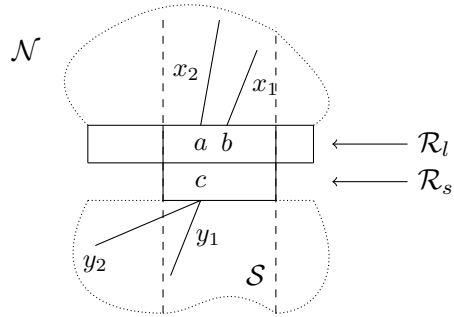

\begin{lem} \label{lem11} 
Suppose, using the notation above,
\begin{equation} y_1\text{---}  c \text{---} b \text{---} x_1 \text{ is a } (k+1)\text{-chain} \text{ in } N'' \text{ contained in } \S, \end{equation}
\begin{equation} \label{cond2} y_2 \text{---} a \text{---} x_2 \text{ is a } (k+1)\text{-chain} \text{ in } N' \text{ not contained in } \S. \end{equation}
Then $M$ has a $(k+1)$-chain or $N'$ has a $(k+1)$-chain that contains $b$.
\end{lem}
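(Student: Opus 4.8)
The plan is to build the required chain by \emph{splicing} the two given chains near the shared column of $a$ and $c$, reusing the southwest part of one chain and the northeast part of the other. Keep the notation $y_1\text{---}c\text{---}b\text{---}x_1$ (in $N''$, inside $\S$) and $y_2\text{---}a\text{---}x_2$ (in $N'$, not inside $\S$), and recall that $a$ lies directly above $c$ while $b$ lies in $\R_l$ to the right of $a$. Since the cells of $y_1$, $x_1$ and $x_2$ all lie strictly below $\R_s$ or strictly above $\R_l$, they carry identical entries in $M$, $N'$ and $N''$; only $c$ (a $1$ solely in $N''$) and $a$ (a $1$ solely in $N'$) differ. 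I would introduce two candidate spliced chains:
\begin{itemize}
\item[(A)] $y_1\text{---}\hat a\text{---}x_2$, where $\hat a$ is the image of $a$ after the rows are swapped back, i.e.\ the $1$-cell of $\alpha$ lying in the short row $\R_s$ of $M$ in the column of $c$; because $a$ is directly above $c$, the part $x_2$ is also northeast of $\hat a$ and $y_1$ is southwest of $\hat a$, so this is a chain of $1$'s in $M$;
\item[(B)] $y_2\text{---}b\text{---}x_1$, a chain of $1$'s in $N'$: $y_2$ is southwest of $a$ and hence of $b$, $x_1$ is northeast of $b$, and $b$ is a $1$ in $N'$.
\end{itemize}

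Next I would run the length bookkeeping. As the given chains have size $k+1$, we have $|y_1|+|x_1| = k-1$ and $|y_2|+|x_2| = k$, so the two candidate sizes $|y_1|+1+|x_2|$ and $|y_2|+1+|x_1|$ sum to $2k+1$. By pigeonhole at least one candidate has at least $k+1$ cells; trimming it (while keeping $b$ in case (B)) yields a genuine $(k+1)$-chain. If (A) is the long one it is a $(k+1)$-chain in $M$; if (B) is, it is a $(k+1)$-chain in $N'$ containing $b$. Either outcome is exactly the conclusion of the lemma.

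The main obstacle, and where I expect the real work to lie, is the submatrix (rectangle/almost-rectangle) condition: a set of northeast $1$-cells is a chain only if its bounding box sits inside the polyomino. This is precisely where the hypothesis that $y_2\text{---}a\text{---}x_2$ is \emph{not} contained in $\S$ is used, together with the assumption that $\R_s$ and $\R_l$ are the only exceptional rows. Since the almost-rectangle option skips a single exceptional row that must contain \emph{no} chain cell, a spliced chain having a cell in the short row $\R_s$ is forced to have a \emph{full} rectangle as its box, which pins it inside $\S$; only a chain whose unique cell among the swapped rows lies in $\R_l$ may exploit the almost-rectangle with $\R_s$ as the empty exceptional row. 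I would therefore split according to how chain $2$ leaves $\S$—to the right through $x_2$ or to the left through $y_2$—and in each case verify that the candidate supplied by the pigeonhole step is legal, if necessary replacing $x_2$ by the sub-chain of its cells lying strictly northeast of $b$ so that the box closes up. The delicate point is that the candidate forced to be long need not be the one that is manifestly legal, so the two cases must be organized so that in each a valid $(k+1)$-chain of the required type is produced; reconciling ``which candidate is long'' with ``which candidate is geometrically admissible'' is the crux.
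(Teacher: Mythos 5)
Your skeleton --- splice the two chains at the shared column and use the count $(|y_1|+|x_1|)+(|y_2|+|x_2|)+2=2k+1$ to force one candidate to be long --- is exactly the engine of the paper's proof, and your candidates (A) and (B) literally appear there (they are the two chains used in the paper's non-crossing subcase where $x_2$ ends strictly above and weakly to the left of $x_1$). But the issue you flag at the end and leave unresolved is not a verification detail; it is the substance of the lemma, and with only your two candidates the argument genuinely fails. Whenever chain $2$ leaves the strip $\S$ to the right, i.e.\ $x_2$ contains a cell $d$ strictly to the right of $\S$, candidate (A) is not a chain in $M$ at all: the pair $\{\hat a, d\}$ violates the submatrix condition, because the corner cell in row $\R_s$ at the column of $d$ is missing from $\M$ ($\R_s$ does not reach that column), and the almost-rectangle escape is unavailable since the omitted exceptional row must contain no cell of the chain, while $\hat a\in\R_s$ is a chain cell. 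Meanwhile (B) can be strictly shorter than $k+1$: for instance $k=3$, $|y_1|=0$, $|x_1|=2$, $|y_2|=0$, $|x_2|=3$, with two cells of $x_2$ strictly to the right of the end of $x_1$ and of $\S$, gives (A) exactly $4=k+1$ cells but (B) only $3=k$. Your proposed repair --- trimming $x_2$ down to the cells compatible with the bounding box --- only shortens (A) and destroys the very pigeonhole inequality you rely on.

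The missing idea is that trimmed cells must not be discarded but transferred to the \emph{other} candidate, so that the two candidate sizes still sum to $2k+1$ and the dichotomy survives. This is what the paper does: it compares the chains $b\text{---}x_1$ and $a\text{---}x_2$ and, according to whether they cross (cutting at the first crossing point $P$) or not (four subcases governed by where $x_2$ ends relative to $x_1$), forms candidate pairs of the shape $y_1\text{---}a\text{---}x_4\text{---}(x_1\backslash x_3)$ in $M$ versus $y_2\text{---}b\text{---}x_3\text{---}(x_2\backslash x_4)$ in $N'$, where $x_3\subseteq x_1$ and $x_4\subseteq x_2$ are the parts below the cutoff; legality of each candidate is then checked pairwise via conditions \eqref{cond4}--\eqref{cond7}. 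In the numerical scenario above, the legal long chain is $y_2\text{---}b\text{---}x_1\text{---}(x_2\backslash x_4)$, which has $5\geq k+1$ cells and contains $b$ --- a chain that is simply not on your candidate list. So: right strategy and correct bookkeeping, but the case analysis and two-sided splicing that make ``long'' and ``legal'' coincide, which is the body of the paper's proof, is missing from yours.
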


\begin{proof} We will discuss cases according to the relative positions of $x_1$ and $x_2$. For the
 discussion, it is helpful to visualize a chain as a piece of string directed northeast connecting
 the centers of its 1-cells, so that it's piecewise linearly increasing. Then we will say that chains
 cross if the corresponding strings cross. Let $r_{\M}(d)$ denote the row in $\M$ that contains the
 cell $d$ and define $r_{\M}(d_1) \leq r_{\M}(d_2)$ if the set of abscissas of the cells of row $r_{\M}(d_1)$
 is contained in the set of abscissas of the cells of $r_{\M}(d_2)$. For any almost-moon polyomino,
 this defines a total order on its rows.
Note that condition (2) implies that 
\begin{equation} \label{cond3} \R_s \leq r_{\N}(d) \text{ for every } d \in x_2 \cup y_2. \end{equation} 
Moreover,
\begin{equation} \label{cond4} \text{$C$ forms a $k$-chain if and only if  every two cells in $C$ form a 2-chain}, \end{equation} 
\begin{equation} \label{cond5} \text{$d_1 \in x_1$ and $d_2 \in x_2$ form a 2-chain if and only if they are in north-east direction}, \end{equation} 
\begin{equation} \label{cond6} \text{$d_1 \in y_1$ and $d_2 \in x_2$ such that $d_2$ is weakly to the left of a cell in $x_1$ form a 2-chain}, \end{equation} 
\begin{equation} \label{cond7} \text{$d_1 \in y_2$ and $d_2 \in x_1$ such that $d_2$ is not higher than the highest point of $x_2$ form a 2-chain}, \end{equation} 
Suppose first that both $x_1$ and $x_2$ are nonempty.
\begin{itemize} 
\item[(1)] If the chains $b \text{---} x_1$ and $a \text{---} x_2$ cross, let $P$ be the first intersection 
point. Let $x_3$ and $x_4$, respectively, be the parts of $x_1$ and $x_2$, respectively, that are strictly
 southwest of $P$. Then

- if $|x_4| > |x_3|$ then $y_1 \text{---} a \text{---}   x_4 \text{---} (x_1\backslash x_3)$ is at least a
 $(k+1)$-chain in $M$;\\
the submatrix condition is satisfied because of~\eqref{cond4}, \eqref{cond5}, and \eqref{cond6};

- if $|x_4| \leq |x_3|$ then $y_2 \text{---} b \text{---}  x_3 \text{---}  (x_2\backslash x_4)$ is at 
least a $(k+1)$-chain in $N'$;\\
the submatrix condition is satisfied because of~\eqref{cond4}, \eqref{cond5}, and \eqref{cond7}.

\item[(2)] If the chains $b \text{---} x_1$ and $a \text{---} x_2$ do not cross, there are 4 cases to be considered:
 \begin{itemize}
 \item[(a)] $x_2$ ends weakly below and strictly to the left of $x_1$. Let $x_3$ be the part of $x_1$ that
 is in the rows weakly below the highest cell of $x_2$.
 Then 
 
 - if $|x_2| > |x_3|$ then $y_1 \text{---} a \text{---}   x_2  \text{---} (x_1\backslash x_3)$ is at least
 a $(k+1)$-chain in $M$;\\
 the submatrix condition is satisfied because of~\eqref{cond4}, \eqref{cond5}, and \eqref{cond6};
 
 -  if $|x_2| \leq  |x_3|$  then $y_2  \text{---} b  \text{---} x_3$ is at least a $(k+1)$-chain in $N'$;\\
 the submatrix condition is satisfied because of~\eqref{cond4} and \eqref{cond7}.
 
 
  \item[(b)] $x_2$ ends strictly above and weakly to the left of $x_1$. Then 
  
  - if $|x_2| \leq |x_1|$ then $y_2  \text{---} b  \text{---} x_1$ is at least a $(k+1)$-chain in $N'$;\\
  the submatrix condition is satisfied because of~\eqref{cond4} and \eqref{cond7};
  
  - if $|x_2| >  |x_1|$ then $y_1  \text{---} a  \text{---} x_2$ is at least a $(k+1)$-chain in $M$;\\
  the submatrix condition is satisfied because of~\eqref{cond4} and \eqref{cond6}.
  
  \item[(c)] $x_2$ ends strictly above and  to the right of $x_1$. Let $x_4$ be the part of $x_2$ that is in
 the columns weakly to the left of the end of $x_2$. Then
  
  - if $|x_4| > |x_1|$ then $y_1  \text{---} a  \text{---} x_4$ is at least a $(k+1)$-chain in $M$;\\
  the submatrix condition is satisfied because of~\eqref{cond4} and \eqref{cond6};
  
  -  if $|x_4| \leq |x_1|$ then $y_2  \text{---} b  \text{---} x_1 \text{---} (x_2\backslash x_4)$ is at
 least a $(k+1)$-chain in $N'$;\\
  the submatrix condition is satisfied because of~\eqref{cond4}, \eqref{cond5}, and \eqref{cond7}.

   \item[(d)] $x_2$ ends weakly below and weakly to the right of $x_1$. This case is not possible because 
then $x_1$ and $x_2$ must cross.
 \end{itemize}
\end{itemize}
Finally, we consider the cases when one of $x_1$ and $x_2$ is empty.
\begin{itemize}
\item[(1)]  If  $|x_2|=0$  then $y_2  \text{---} b$ is a $(k+1)$-chain in $N'$; \\
 the submatrix condition is satisfied because of~\eqref{cond3}.

\item[(2)] If  $|x_2|\neq0$ but $|x_1|=0$,  let $x_5$ be the part of $x_2$ that is in the columns weakly to
 the left of $b$. Then

 - if $|x_5| > 0$ then $y_1  \text{---} a  \text{---} x_5$ is at least a $(k+1)$-chain in $M$;\\
  the submatrix condition is satisfied because of~\eqref{cond4} and \eqref{cond6};
  
 -  if $|x_5| =0 $ then $y_2  \text{---} b  \text{---} x_2 $ is a $(k+1)$-chain in $N'$;\\
  the submatrix condition is satisfied because of~\eqref{cond3}.
\end{itemize}
\end{proof}

To conclude, the construction of $N''$ and the preceding three lemmas imply the following property: 
 For $M \in \F^{II}(M)$, the above constructed filling $N'' = \phi_{\M, \N}(M) $ of $\N$ has the same
 column sums as $M$ and  $\ne(N'')=\ne(M)$.

We next prove some properties of  the transformation of the, so far, partially defined map $\phi_{\M,\N}$. 
These will be useful in extending the definition of  $\phi_{\M,\N}$ to  $\F^{III}(\M).$

\begin{lem}\label{cyclelemma} Let  $M \in \F^{II}(\M)$ with $\ne(M)=k$. 
 Let $N''=\phi_{\M,\N}(M)$ be defined as in Case II. Then 
\[ \ne(f_{\N,\M}(N'')) = \ne(N'') +1 = k+1.\] In other words, if $M \in \F^{II}(\M)$  then $\phi_{\M,\N}(M) \in \F^{II}(\N)$. 
Moreover, 
\[ \phi_{\N,\M}(\phi_{\M,\N}(M))=M. \] Consequently, $\phi_{\M,\N}$ restricted on $\F^{II}(\M)$ is a bijection onto $\F^{II}(\N)$.
\end{lem}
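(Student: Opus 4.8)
The plan is to analyze the composite $\phi_{\N,\M}\circ\phi_{\M,\N}$ on a filling $M\in\F^{II}(\M)$ by first writing down explicitly the intermediate filling $\tilde N:=f_{\N,\M}(N'')$, where $N''=\phi_{\M,\N}(M)$. As before I assume $\R_s$ sits directly above $\R_l$ in $\M$, so that in $\N$ the longer row $\R_l$ sits directly above $\R_s$; since the two rows are adjacent, every other row keeps both its filling and its relative position in $M$, $N'=f(M)$, $N''$ and $\tilde N$. Tracking the four blocks $\alpha,\beta,\gamma,\delta$ through the definitions, $N''$ has longer row $\gamma\,(\alpha\setminus\alpha_0)\,\delta$ and shorter row $\beta\cup\beta_0$, and a direct computation with Definition~\ref{f_definition} then gives that $\tilde N$ has shorter row $\alpha\setminus\alpha_0$ and longer row $\gamma\,(\beta\cup\beta_0)\,\delta$. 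In other words, \emph{$\tilde N$ is obtained from $M$ by shifting the problem cells $\alpha_0$ vertically from the shorter row $\R_s$ down into the longer row $\R_l$, into the positions $\beta_0$}, which are $0$-cells of $M$ in the same columns.

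First I would establish Part~1, namely $\ne(\tilde N)=k+1$, which is exactly the assertion $N''\in\F^{II}(\N)$. The upper bound $\ne(\tilde N)\le k+1$ is free: $\ne(N'')=k$ was proved in Lemmas~\ref{beta0}--\ref{lem11}, and since $\tilde N=f_{\N,\M}(N'')$, Lemma~\ref{lem1} gives $|\ne(\tilde N)-\ne(N'')|\le 1$. For the lower bound I would produce a $(k+1)$-chain in $\tilde N$: take any $a\in\alpha_0$ together with the $(k+1)$-chain $y\text{---}a\text{---}x$ of $N'$ witnessing that $a$ is a problem cell, where $a$ is the unique cell of the chain in $\R_s\cup\R_l$, the cells of $y$ lie in rows below $\R_l$ and those of $x$ in rows above $\R_l$ (and these rows coincide in $\N$ and $\M$). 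Replacing $a$ by the $1$-cell $a'$ of $\tilde N$ in row $\R_l$ and the same column as $a$, I claim $y\text{---}a'\text{---}x$ is a $(k+1)$-chain of $\tilde N$. Monotonicity is immediate because the columns are unchanged and $a'$ still lies strictly above the lower rows and strictly below the upper rows; the submatrix condition holds because $\R_l$ is the longer row, so its support contains the column of $a'$, and the row $\R_s$ skipped by the chain plays the role of the single exceptional row permitted in the almost-rectangle condition. This simultaneously shows $\ne(\tilde N)\ge k+1$ and that every cell of $\beta_0$ is a problem cell of $\tilde N$.

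Next I would pin down the problem cells of $\tilde N$ exactly, which is what Part~2 needs. Having shown that $\beta_0$ consists of problem cells, it remains to check that no $1$-cell of $\R_l$ outside $\beta_0$ lies in a $(k+1)$-chain of $\tilde N$. Cells in $\gamma$ or $\delta$ are excluded by the same observation used in Case~II that problem cells occur only in the overlap part of the longer row. For a $1$-cell $b$ belonging to the original block $\beta$ (so $b$ is a $1$-cell of both $M$ and $\tilde N$), I would argue by contradiction exactly as in Lemmas~\ref{chainN"} and~\ref{cainN'}: a $(k+1)$-chain of $\tilde N$ through $b$ would, after possibly trading a shifted cell of $\R_l$ back for the corresponding $0$-cell of $\R_s$, produce a $(k+1)$-chain in $M$, contradicting $\ne(M)=k$. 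Hence the problem cells of $\tilde N$ are precisely $\beta_0$, so the Case~II construction applied to $\tilde N$ shifts exactly $\beta_0$ up from $\R_l$ to $\R_s$; this removes $\beta_0$ from the longer row and restores $\alpha_0$ in the shorter row, recovering $M$. Thus $\phi_{\N,\M}(N'')=M$.

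Finally I would deduce the bijection statement. Parts~1 and~2 show $\phi_{\M,\N}$ maps $\F^{II}(\M)$ into $\F^{II}(\N)$ and admits $\phi_{\N,\M}$ as a left inverse there, hence is injective; applying the identical argument with the roles of $\M$ and $\N$ interchanged shows $\phi_{\N,\M}$ maps $\F^{II}(\N)$ into $\F^{II}(\M)$ with $\phi_{\M,\N}\circ\phi_{\N,\M}=1_{\F^{II}(\N)}$, giving surjectivity. Therefore $\phi_{\M,\N}$ restricts to a bijection $\F^{II}(\M)\to\F^{II}(\N)$ with inverse $\phi_{\N,\M}$. I expect the main obstacle to be the geometric bookkeeping of the submatrix (almost-rectangle) condition in the two chain arguments above — verifying that moving a single cell between the two adjacent special rows neither destroys an existing chain nor requires a cell of the skipped exceptional row — which is the same delicate point handled in the case analysis of Lemma~\ref{lem11}, and which I would treat by reusing the total order on rows $r_\M(\cdot)$ together with the numbered structural conditions recorded in that proof.
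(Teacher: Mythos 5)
Your proposal is correct and takes essentially the same approach as the paper: you compute the intermediate filling $f_{\N,\M}(N'')$ explicitly, obtain the lower bound $\ne(f_{\N,\M}(N''))\ge k+1$ by substituting each problem cell's witnessing chain with its vertical neighbor in the longer row, get the upper bound from Lemma~\ref{lem1} together with $\ne(N'')=k$, show the problem cells of the intermediate filling are exactly $\beta_0$, and conclude that the Case~II construction for $\phi_{\N,\M}$ undoes the shift and recovers $M$. The only difference is that you spell out in detail (via the submatrix/almost-rectangle verification and the exclusion of $\beta$, $\gamma$, $\delta$ cells) the steps the paper compresses into ``it is not difficult to see.''
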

\begin{proof} Suppose $A$ is the set of 1-cells in row $\R_l$ of $N' = f(M)$ and  $A_0$ is the subset of $A$ 
containing the problem cells.  Let also $B$ be the set of 1-cells in $\R_s$ of  $N'$ and  $B_0$ the set of 
cells in $\R_s$ that share a horizontal edge with a cell in$A_0$. By construction, the set of 1-cells 
in row $\R_l$ of $N''=\phi_{\M,\N}(M)$ is $A_1= A \backslash A_0$ and the set of 1-cells in row $\R_s$ is $B_1 = B \cup B_0$. 

We now consider $M'=f_{\N, \M}(N'') \in \F(\M)$. Let $a \in A_0$ be a problem cell in $N'$. Then, by definition,
 $a$ is a part of a $(k+1)$-chain $y\text{---}a\text{---}x$ in $N'$ and the cells in the chain are all contained 
in the vertical strip determined by row $\R_l$. Let $b \in B_0$ be the neighbor cell of $a$. Then $y\text{---}b\text{---}x$ 
forms a $(k+1)$ chain in $M'$. Since $M'$ contains a $(k+1)$ chain,  Lemma~\ref{lem1} implies that $\ne(M') = k+1$. 
The preceding argument also shows that the 1-cells  in $B_0$ are problem cells in $M'$ and it is not difficult
 to see that these are the only problem cells. Therefore, to construct $\phi_{\N,\M}(N'') \in \F(\M)$, we 
replace the 1-cells in $B_0$ by 0's and the neighboring 0-cells in $A_0$ by 1's, which yields the initial filling $M$.
\end{proof}


\begin{lem}  \label{case3} 
Let $M \in \F^{III}(\M)$. Then $\phi_{\M, \N}(M)$ is well defined, its column sums 
are the same as $M$'s, and $\ne(\phi_{\M, \N}(M)) = \ne(M)$. Moreover, $\phi_{\M, \N}(M) \in \F^{III}(\N)$ and $\phi_{\N, \M} ( \phi_{\M, \N}(M)) =M$. Consequently, $\phi_{\M,\N}$ restricted on $\F^{III}(\M)$ is a bijection onto $\F^{III}(\N)$.
\end{lem}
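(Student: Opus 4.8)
The plan is to prove everything by a formal diagram chase, reducing each claim to three facts already in hand: the involutivity $f_{\N,\M}\circ f_{\M,\N}=1$ of Lemma~\ref{f_prop}, the trichotomy of Lemma~\ref{lem1}, and the full strength of Lemma~\ref{cyclelemma} (that $\phi$ restricts to a $\ne$- and column-sum-preserving bijection of the Case~II sets which satisfies the inverse identity on them). First I would check that the Case~III recipe $\phi_{\M,\N}(M)=f_{\M,\N}(\phi_{\N,\M}(f_{\M,\N}(M)))$ typechecks. Given $M\in\F^{III}(\M)$ with $\ne(M)=k$, set $N'=f_{\M,\N}(M)$, so $\ne(N')=k-1$. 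Since $f_{\N,\M}(N')=M$ by Lemma~\ref{f_prop}, we get $\ne(f_{\N,\M}(N'))=k=\ne(N')+1$, i.e. $N'\in\F^{II}(\N)$. Thus the Case~II map $\phi_{\N,\M}$ legitimately applies to $N'$, and by Lemma~\ref{cyclelemma} (with $\M$ and $\N$ interchanged) the filling $P:=\phi_{\N,\M}(N')$ lies in $\F^{II}(\M)$ with $\ne(P)=\ne(N')=k-1$. The output is $\phi_{\M,\N}(M)=f_{\M,\N}(P)$, so the composition is well defined.

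Next I would track the two invariants. For column sums: $f_{\M,\N}$ merely swaps the row contents within the common column support of $\R_s$ and leaves the $\gamma,\delta$ portions fixed, so it preserves column sums, while the conclusion preceding Lemma~\ref{cyclelemma} gives that the Case~II map preserves column sums on $\F^{II}$. As $\phi_{\M,\N}|_{\F^{III}(\M)}$ is a composite of these three column-sum-preserving maps, it preserves column sums. For the chain statistic, the key computation is that $P\in\F^{II}(\M)$ with $\ne(P)=k-1$, so Lemma~\ref{lem1} forces $\ne(f_{\M,\N}(P))=\ne(P)+1=k=\ne(M)$, which is exactly $\ne(\phi_{\M,\N}(M))=\ne(M)$.

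Then I would verify membership in $\F^{III}(\N)$ and the inverse identity. Writing $N''=\phi_{\M,\N}(M)=f_{\M,\N}(P)$, Lemma~\ref{f_prop} gives $f_{\N,\M}(N'')=P$, hence $\ne(f_{\N,\M}(N''))=k-1=\ne(N'')-1$, so $N''\in\F^{III}(\N)$. For the inverse I would apply the Case~III definition to $N''\in\F^{III}(\N)$, namely $\phi_{\N,\M}(N'')=f_{\N,\M}(\phi_{\M,\N}(f_{\N,\M}(N'')))$. Here $f_{\N,\M}(N'')=P\in\F^{II}(\M)$, and since $P=\phi_{\N,\M}(N')$ the ``Moreover'' part of Lemma~\ref{cyclelemma} (interchanging $\M,\N$) yields $\phi_{\M,\N}(P)=\phi_{\M,\N}(\phi_{\N,\M}(N'))=N'$; finally $f_{\N,\M}(N')=M$ by Lemma~\ref{f_prop}. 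Chaining these equalities gives $\phi_{\N,\M}(\phi_{\M,\N}(M))=M$.

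Having shown $\phi_{\M,\N}(\F^{III}(\M))\subseteq\F^{III}(\N)$, the symmetric inclusion, and $\phi_{\N,\M}\circ\phi_{\M,\N}=1_{\F^{III}(\M)}$ (with its mirror $\phi_{\M,\N}\circ\phi_{\N,\M}=1_{\F^{III}(\N)}$), these maps are mutually inverse, so $\phi_{\M,\N}$ restricts to a bijection $\F^{III}(\M)\to\F^{III}(\N)$, completing the proof. I expect the only real obstacle to be bookkeeping: keeping straight the direction of each $f$ and $\phi$ and invoking Lemma~\ref{cyclelemma} with $\M$ and $\N$ swapped at precisely the right step. No new combinatorial content is needed here, since the substantive work was already done in the Case~II analysis (Lemmas~\ref{beta0}--\ref{cyclelemma}); the genuine risk is an off-by-one in the $\ne$ values or a mismatched composition rather than a conceptual difficulty.
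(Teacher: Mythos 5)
Your proof is correct and follows essentially the same route as the paper's: unfold the Case~III definition, use Lemma~\ref{f_prop} to show $f_{\M,\N}(M)\in\F^{II}(\N)$, invoke Lemma~\ref{cyclelemma} with the roles of $\M$ and $\N$ interchanged, and chase the resulting identities; your write-up is in fact more explicit than the paper's (which leaves the $\F^{III}(\N)$ membership and the inverse chain implicit). One trivial remark: the step $\ne(f_{\M,\N}(P))=\ne(P)+1$ follows from the definition of $\F^{II}(\M)$ rather than from Lemma~\ref{lem1}, but your deduction is valid as written since you also cite $P\in\F^{II}(\M)$.
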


\begin{proof} To see that $\phi_{\M, \N}(M)$ is well defined, let $N'= f_{\M, \N}(M)$. 
Then $f_{\N, \M}(N') = M$ and $N' \in \F^{II}(\N)$ because $\ne(f_{\N,\M}(N'))= \ne(N') +1$.
 Therefore, $\phi_{\N, \M}(N') $ is well defined and so is $\phi_{\M, \N}(M)$. 
 Additionally, Lemma~\ref{cyclelemma}  implies that $\phi_{\N, \M}(N') \in \F^{II}(\M)$. Therefore,
\[ \ne(\phi_{\M, \N}(M)) = \ne(\phi_{\N, \M}(f_{\M, \N}(M))) +1 = \ne(f_{\M, \N}(M)) +1 = \ne(M).\]
It is clear that the column sums of $M$ and $\phi_{\M, \N}(M)$ are equal. 
Finally, $\phi_{\N, \M} ( \phi_{\M, \N}(M)) =M$ follows from  Lemma~\ref{f_prop} and Lemma~\ref{cyclelemma}.
\end{proof}

\section{Maximal increasing sequences in fillings with restricted row sum} 


In this section we  restrict to fillings with at most one 1 in each row  and 
prove Theorem \ref{thm2}. Explicitly,  let $\M$ and $\N$ be  two almost-moon polyominoes that  can be  obtained 
from each  other by an interchange of two adjacent rows.  Assume that $\M$ and $\N$ have no exceptional rows other than the 
swapped ones. 
Let  $\textbf{r} \in \{0,1\}^*$ and $\mathbf{c} \in \mathbf{N}^*$, 
we shall construct  a bijection $\psi_{\M,\N}$  from 
 $\F(\M, \textbf{r}, \textbf{c}) $ to $\F(\M, \textbf{r}', \textbf{c})$ that preserves the
 size of the largest ne-chains, where $\textbf{r}'$ is obtained from $\mathbf{r}$ by exchanging the entries corresponding 
 to the two swapped rows. 

If the two swapped rows are of equal length, then $\M=\N$ and we can simply take $\psi_{\M,\N}$ to be the identity map. 
In the following, we assume that the two swapped rows are $\R_s$ and $\R_l$, where the length of $\R_s$ is smaller than that of $\R_l$. 
We keep the notations as in the previous section.  For any fillings $M $ 
let  $\alpha, \beta, \gamma, \delta$ be as defined in Figure  \ref{fig:f_example}.    Note that for 
 $M \in \F(\M, \textbf{r}, \textbf{c}) $, there is at most one 1 in $\alpha$, as well as in the 
union of $\beta$, $\gamma$ and $\delta$. 
 Define the \emph{coupling filling}  of $M$ to be the filling $M'$ of $\M$  which is obtained from $M$ by exchanging the fillings $\alpha$ and $\beta$. 
Let $N$ (resp. $N'$) be obtained from $M$ 
(resp. $M'$) by swapping the rows $\R_s$ and $\R_l$ together with their fillings. 
In other words, $N=f_{\M,\N}(M')$ and $N'=f_{\M,\N}(M)$. 
Clearly $N$ and $N'$ are coupling fillings of  each other.

We need the following lemma, which is the crucial observation for the construction of $\psi_{\M,\N}$. 
\begin{lem} \label{lemma3-1} 
 Let fillings $(M, M')$ be a pair of coupling fillings of in  $\F(\M, \textbf{r}, \textbf{c})$,  
 and $(N, N')=(f_{\M,\N}(M'), f_{\M,\N}(M))$ be  fillings of $\N$. Then  
$\ne(M) = \ne(N)$ or $\ne(M)= \ne(N')$. 
\end{lem}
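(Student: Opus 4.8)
The plan is to exploit that each of $\R_s$ and $\R_l$ carries at most one $1$, so the four fillings $M,M',N,N'$ differ only through two distinguished cells. Write $a$ for the (at most one) $1$-cell of the short row $\R_s$ (the one in $\alpha$) and $b$ for the (at most one) $1$-cell of the long row $\R_l$ (the one in $\gamma\beta\delta$), and, as in the proof of Lemma~\ref{lem11}, assume without loss of generality, by the invariance of $\ne$ under $180^\circ$ rotation, that $\R_s$ lies above $\R_l$ in $\M$. First I would unwind Definition~\ref{f_definition}: since $M'$ exchanges $\alpha$ and $\beta$, the filling $N=f_{\M,\N}(M')$ carries \emph{exactly} the entries of $M$ in the two rows ($\alpha$ in $\R_s$ and $\gamma\beta\delta$ in $\R_l$), so $N$ is just $M$ with the two adjacent rows transposed. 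The filling $N'=f_{\M,\N}(M)$ instead relocates the overhangs $\gamma,\delta$ and swaps $\alpha,\beta$ between the rows; crucially, when $b$ lies in $\beta$ (so $\gamma,\delta$ carry no $1$) the fillings $M$ and $N'$ have \emph{identical} $1$-cells and differ only by which of the two rows is the longer one.

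Next I would prove a confinement lemma: any ne-chain containing the short-row cell $a$ lies entirely within the columns of $\R_s$ (the $\beta$-columns), because the short row is then one of the chain's rows, it contains a chain element, and hence cannot be the exceptional row exempted by the submatrix condition, forcing every chain column into the support of $\R_s$. With this I would establish a transfer principle between $M$ and $N$: every ne-chain of $M$ that does \emph{not} use both $a$ and $b$ is a valid ne-chain of $N$ of the same size, and conversely. This is checked by the cases background-only, $a$-only (confined to the $\beta$-columns), and $b$-only (the long row has full support), using that $\R_s,\R_l$ are adjacent, so transposing them alters no cell's position relative to the background. A short computation then shows that $a$ and $b$ lie in a common ne-chain in $M$ only if $b\in\beta$ and $a>b$, and in $N$ only if $b\in\beta$ and $a<b$; in particular at most one of $M,N$ admits such a chain, and the transfer principle gives $\lvert\ne(M)-\ne(N)\rvert\le 1$.

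The argument then splits. If the maxima of both $M$ and $N$ are attained by chains avoiding the pair $\{a,b\}$, the transfer principle yields $\ne(M)=\ne(N)$ and we are done. Otherwise the larger of $\ne(M),\ne(N)$ is attained only by chains using both $a$ and $b$; by the confinement lemma any such chain lies in the $\beta$-columns, so $b\in\beta$. In either case one can exhibit in $N'$ a chain of length $\ne(M)$ lying in the $\beta$-columns — the extremal $\{a,b\}$-chain itself when it belongs to $M$ (as $b\in\beta$, $M$ and $N'$ share $1$-cells there), or a one-cell deletion of the extremal chain when it belongs to $N$ — and since the $\beta$-columns are supported in both rows of $\N$ this chain survives in $N'$. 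Hence $\ne(N')\ge\ne(M)$.

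To finish I would prove the matching bound $\ne(N')\le\ne(M)$, and I expect this to be the main obstacle. A strictly longer chain of $N'$ would have to run through the cell that the long row of $N'$ now carries and escape the $\beta$-columns. When it does not cross the column band between $a$ and $b$, deleting that cell produces a background subchain that is valid in $M$, contradicting the maximality established above; this disposes of the non-crossing configurations cleanly. The crossing configurations are the delicate point: there one cannot simply delete the cell, and instead must perform an exchange on the two ends of the chain, keeping careful track of the total order $\le$ on rows and of the submatrix/exceptional-row conditions exactly as in the crossing analysis of Lemma~\ref{lem11}. Combining the two bounds gives $\ne(M)=\ne(N')$, completing the dichotomy $\ne(M)=\ne(N)$ or $\ne(M)=\ne(N')$.
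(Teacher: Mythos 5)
Your plan reproduces the skeleton of the paper's argument (degenerate cases, the transfer and confinement principles, the observation that at most one of $M,N$ has its two distinguished cells in northeast position, and the reduction to the case where the larger of $\ne(M),\ne(N)$ is attained only by chains through both cells), and your treatment of the case $\ne(M)=\ne(N)+1$ is correct and in fact cleaner than the paper's: deleting the long-row cell from a putative $(k+1)$-chain of $N'$ leaves a background $k$-chain of $M$ avoiding both distinguished cells, contradicting the maximality you established for $M$; the paper instead gets $\ne(N')\le \ne(N)+1$ by a four-case comparison of $N'$ with $N$. The genuine gap is in the opposite case $\ne(N)=\ne(M)+1=k+1$, which is the crux of Lemma~\ref{lemma3-1}. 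There your deletion step proves nothing: deleting the long-row cell from a $(k+1)$-chain of $N'$ again yields a background $k$-chain valid in $M$, but $\ne(M)=k$ tolerates such chains, and the only maximality statement available in this case concerns the $(k+1)$-chains of $N$ (they must contain both of $N$'s cells), which a $k$-chain in $M$ does not contradict. So your claim that this ``disposes of the non-crossing configurations cleanly'' is false in exactly the case that matters, and in fact no configuration there can be killed by deletion alone.

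What this case requires --- and what the paper does --- is a two-chain splicing argument: one must play the putative $(k+1)$-chain of $N'$ (through the long-row cell, escaping the strip $\S$) against an extremal $(k+1)$-chain of $N$ (through both of $N$'s cells, confined to $\S$), exchange their tails, and verify the submatrix conditions using the comparability of rows; this is precisely Lemma~\ref{lem11}, whose conclusion is that either $M$ has a $(k+1)$-chain (contradicting $\ne(M)=k$) or there is a $(k+1)$-chain through the position $b$ whose other cells are background $1$-cells --- impossible because $b$ is a $0$-cell of $N'$ (equivalently, it would be a $(k+1)$-chain of $N$ omitting $c$, contradicting the maximality for $N$). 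Your proposal gestures at this (``exactly as in the crossing analysis of Lemma~\ref{lem11}'') but never states that the exchange involves the extremal chain of $N$ as a second input, nor what contradiction it produces; moreover your dichotomy concerns a single chain crossing ``the column band between $a$ and $b$,'' whereas the relevant dichotomy in Lemma~\ref{lem11} is whether the two chains cross each other, and even its non-crossing cases require splicing together with the row-order condition \eqref{cond3}, not deletion. Since Lemma~\ref{lem11} is already proved in the paper, the correct move is simply to invoke it at this point together with the observation that $b$ is a $0$-cell of $N'$ --- which is exactly what the paper's proof does; as written, your plan leaves the essential case open.
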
 

We postpone the proof of Lemma \ref{lemma3-1} and explain how it helps us  construct the bijection $\psi_{\M,\N}$.  Lemma \ref{lemma3-1} implies that 
\begin{eqnarray} \label{set-equal} 
\{ \ne(M), \ne(M')\} = \{ \ne(N), \ne(N')\}
\end{eqnarray} 
as multisets. To see this, note that if $\ne(N)=\ne(N')=k$, then applying Lemma \ref{lemma3-1} to both $M$ and $M'$ yields
 $\ne(M)=\ne(M')=k$. 
Otherwise, if $\ne(N)\neq \ne(N')$, then applying Lemma \ref{lemma3-1} to $N, N'$ yields  that one of $\ne(M), \ne(M')$ equals $\ne(N)$,
and the other equals $\ne(N')$. 
Equation \eqref{set-equal}  allows us to construct an ne-preserving  bijection between
 the coupling fillings $\{M, M'\}$ and  $\{N, N'\}$. 
Combining the bijections of all the coupling fillings we get a  desired bijection. 
Explicitly, we can describe the map  $\psi_{\M,\N}$ as follows:  

\noindent  Let $M$ be a filling in  $\F(\M, \textbf{r}, \textbf{c})$.   
\begin{enumerate} 
\item If either $\alpha$ or $\beta$ has no 1,  then let $\psi_{\M,\N}(M)= N$, 
the filling of $\N$ obtained by swapping the two rows together with their fillings.  
\item If both $\alpha$ and $\beta$ contain a 1 in the same column, then again let $\psi_{\M,\N}(M)= N$.
\item If each of $\alpha$ and $\beta$ contain a unique 1 in a distinct column, 
\[
\psi_{\M,\N}(M)=\begin{cases}
  N & \text{if } \ne(M)=\ne(N) \\
 N'  & \text{if } \ne(M) \neq \ne(N). 
 \end{cases}\]
\end{enumerate} 
It is clear that the map $\psi_{\M,\N}$ is well-defined and preserving the statistic $\ne$. In addition,  
$\psi_{\M,\N} $  and $\psi_{\N,\M}$ are inverse to each other.

\begin{proof}[Proof of  Lemma \ref{lemma3-1}] 
If either $\alpha$ or $\beta$ contains no 1, or they both contain a $1$ in the same column, 
then swapping the two rows with their fillings will not change any ne-chains, 
and hence $\ne(M)= \ne(N)$.  In the following we assume that each of $\alpha$ and $\beta$ contains a unique 1-cell in a 
distinct column, and  
there is no 1 in $\gamma$ or $\delta$. Furthermore we assume that in $\M$, the row $\R_s$ is above the row $\R_l$.  
The opposite case can be treated similarly.

We consider the relative positions of the two 1-cells in $R_s$ and $\R_l$.  See Figure \ref{1-cells} for an illustration. 

 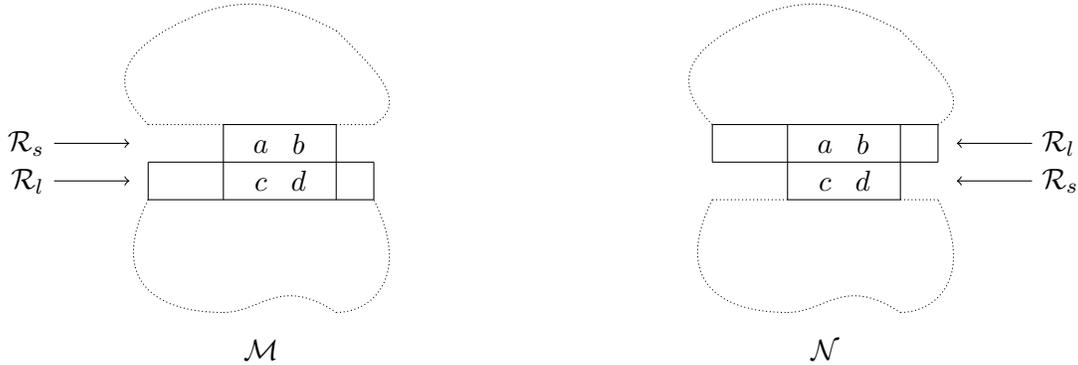
\begin{figure}[ht]
\centering
  \begin{tikzpicture}
  \draw (0.5, 2) -- (3.5,2);
   \draw (0.5, 2.5) -- (3.5,2.5);
    \draw (1.5, 3) -- (3,3);
    \draw (0.5, 2) --(0.5, 2.5);
   \draw (1.5, 2) -- (1.5,3);
   \draw (3, 2) -- (3, 3);
   \draw (3.5, 2) -- (3.5, 2.5);
   \draw (2.00, 2.70) node {$a$}; \draw (2.50,2.75) node {$b$};
    \draw (2.00, 2.20) node {$c$}; \draw (2.50,2.25) node {$d$};
   \draw[densely dotted] (0.5, 2) .. controls(0,1) and (0.5,0.5) .. (1.5,0.5) ;
\draw[densely dotted] (1.5,0.5) .. controls (2,0.5) and (2.25, 1)  .. (3,0.5);
\draw[densely dotted] (3,0.5) .. controls (3.5, 0.5) and (4, 1) ..  (3.5, 2);
\draw[densely dotted] (1.5,3) -- (0.5,3);
\draw[densely dotted] (0.5,3) .. controls (0, 3.5) and (0, 4) .. (1.25, 4.5);
\draw[densely dotted] (1.25,4.5) .. controls (2, 4.75) and (2.5, 4.5) .. (3, 4.25);
\draw[densely dotted] (3,4.25) .. controls (3.5, 3.75) and (4, 3.3) .. (3.5,3);
\draw[densely dotted] (3,3) -- (3.5,3);
\draw[->] (-0.75, 2.75) node[left] {$\mathcal{R}_s$} -- (0.25, 2.75);
\draw[->] (-0.75, 2.25) node[left] {$\R_l$} -- (0.25, 2.25);


  \draw[xshift = 7.5cm] (0.5, 3) -- (3.5,3);
   \draw[xshift = 7.5cm] (0.5, 2.5) -- (3.5,2.5);
    \draw[xshift = 7.5cm] (1.5, 2) -- (3,2);
    \draw[xshift = 7.5cm] (0.5, 2.5) --(0.5, 3);
   \draw[xshift = 7.5cm] (1.5, 2) -- (1.5,3);
   \draw[xshift = 7.5cm] (3, 2) -- (3, 3);
   \draw[xshift = 7.5cm] (3.5, 2.5) -- (3.5, 3);
   \draw[xshift = 7.5cm] (2.00, 2.70) node {$a$};  \draw[xshift=7.5cm]  (2.50,2.75) node {$b$};
    \draw[xshift = 7.5cm] (2.00, 2.20) node {$c$};\draw[xshift=7.5cm]  (2.50,2.25) node {$d$};
\draw[densely dotted, xshift=7.5cm] (0.5, 2) .. controls(0,1) and (0.5,0.5) .. (1.5,0.5) ;
\draw[densely dotted, xshift=7.5cm] (1.5,0.5) .. controls (2,0.5) and (2.25, 1)  .. (3,0.5);
\draw[densely dotted, xshift=7.5cm] (3,0.5) .. controls (3.5, 0.5) and (4, 1) ..  (3.5, 2);
\draw[densely dotted, xshift=7.5cm] (1.5,2) -- (0.5,2);
\draw[densely dotted, xshift=7.5cm] (0.5,3) .. controls (0, 3.5) and (0, 4) .. (1.25, 4.5);
\draw[densely dotted, xshift=7.5cm] (1.25,4.5) .. controls (2, 4.75) and (2.5, 4.5) .. (3, 4.25);
\draw[densely dotted, xshift=7.5cm] (3,4.25) .. controls (3.5, 3.75) and (4, 3.3) .. (3.5,3);
\draw[densely dotted, xshift=7.5cm] (3,2) -- (3.5,2);
\draw[->, xshift=7.5cm] (4.75, 2.75) node[right] {$\R_l$} -- (3.75, 2.75);
\draw[->, xshift=7.5cm] (4.75, 2.25) node[right] {$\mathcal{R}_s$} -- (3.75, 2.25);

\draw (2.00, 0) node {$\M$}; \draw[xshift=9.5cm] node {$\N$}; 
\end{tikzpicture}
\caption{Positions of the 1-cells in the coupling fillings. }
\label{1-cells}
\end{figure}

\fbox{\textbf{Case I.}} \ In the filling $M$ the two 1-cells in rows $\R_s$ and $\R_l$ form an ne-chain of size 2.  
That is, the 1-cells of  $\M$ in rows $\R_s$ and $\R_l$ are $b$ and $c$ as in the left figure of Figure \ref{1-cells} . 
Then $N$ is the filling of $\N$ in which the only 1-cells in rows $R_s$ and $R_l$ are $a $ and $d$, 
as in the right figure of Figure \ref{1-cells}. 
Clearly $\ne(N) \leq \ne(M)$ as every ne-chain of $N$ is still an ne-chain of $M$. 
 
Assume $\ne(M)=k$. If $\ne(N)\neq \ne(M)$ then $\ne(N)=k-1$, and every ne-chain of length $k$ in $M$ contains 
both $b$ and $c$.  In particular, $N'$, the coupling filling of $N$ which contains $b$ and $c$ in $\N$,
has ne-chains of length $k$. It follows that $\ne(N') \geq k$. 

Comparing fillings $N$ and $N'$. Let $C$ be an $l$-chain in $N'$. 
\begin{itemize}
\item[(i)] If $C$ contains no 1-cells from $\R_s \cup \R_l$ then $C$ is an $l$-chain in $N$.
\item[(ii)] If $C$ contains the 1-cell $c$,  then $C\cup \{ a\} -\{c\} $ is an $l$-chain in $N$.
\item[(iii)] If $C$ contains the 1-cell $b$, then $C-\{b\}$ is an $(l-1)$-chain in $N$.
\item[(iv)] If $C$ contains both $b$ and $c$,  then $C\cup \{a\}-\{b,c\}$ is an $(l-1)$-chain in $N$.
\end{itemize}
Therefore  $\ne(N) \geq \ne(N')-1$ and hence $\ne(N') \leq \ne(N) +1 \leq k$. Thus we must have $\ne(N')=k=\ne(M)$.

\fbox{\textbf{Case II.}}\  In the filling $M$ the two 1-cells in rows $\R_s$ and $\R_l$ do not form an ne-chain of size 2.  Then  $M$ has 1-cells 
$a, d$ in $\M$, $N$ has 1-cells $b,c$ and $N'$ has 1-cells $a,d$ in $\N$. 
Similarly as in Case 1 we have
\begin{eqnarray*}
\ne(M) \leq \ne(N) \leq \ne(M) +1 \\
\ne(N)-1 \leq  \ne(N') .
\end{eqnarray*} 
Using Lemma \ref{lem1} we also have $|\ne(M) - \ne(N')| \leq 1$. 
Assume that $\ne(M)=k$ is not equal to $\ne(N)$ or $\ne(N')$. 
Combining these three inequalities, we derive  that $\ne(N)=\ne(N')=k+1$. In addition, 
\begin{itemize} 
\item[(1)] Every $(k+1)$-chain  in $N$ contains both $b$ and $c$, and hence lies in the strip $\S$, where $\S$ is 
the union of columns that intersect the row $\R_s$, as defined in previous section.
\item[(2)] Every $(k+1)$-chain  in $N'$ contains $a$, and is not lying inside $\S$. 
\end{itemize} 
This is exactly the situation described in Lemma \ref{lem11}.  Comparing with Figure \ref{fig:twochains} and 
applying Lemma \ref{lem11}, also  noting that  the filling $N'$ does not have  $b$ as a 1-cell, we conclude  that $M$  has an ne-chain of length $k+1$. This is a contradiction. 
\end{proof}

Although Lemma \ref{lem11} plays an important role in the proofs of both Theorem \ref{thm1} and \ref{thm2},
we remark that the map $\psi_{\M,\N}$ is different than the map $\phi_{\M,\N}$ restricted to the set
$\F(\M,\mathbf{r}, \mathbf{c})$. For one thing, $ \phi_{\M,\N}$ does not always reserve the row sum,
hence not necessarily maps fillings of  $\F(\M,\mathbf{r}, \mathbf{c})$ to $\F(\N,\mathbf{r}', \mathbf{c})$ when 
$\mathbf{r} \in \{0,1\}^*$.

\section{Concluding remarks} \label{S:remarks}


We conclude this paper with some comments and counterexamples to a few seemingly natural generalizations of 
Theorem \ref{thm1} and \ref{thm2}. 

\noindent $\bullet$ \ 
\emph{Symmetry of $(\ne, \se)$ for fillings with $\mathbf{r} \in \{0,1\}^*$. }

 A \emph{southeast} chain, or shortly \emph{se-chain}, of size $k$ in a $01$-filling $M$  is a set of $k$ cells 
 \[\{ (i_1, j_k),  (i_2, j_{k-1}), \dots, (i_k, j_1)\}\] with 
 $i_1 < \cdots < i_k$, $ j_1 < \cdots < j_k$
 filled with 1's such that the $k \times k$ submatrix 
\[   \mathcal{G} = \{ (i_r, j_s) : 1 \leq r \leq k, 1 \leq s \leq k\} \] is 
contained in the polyomino.  We denote by $\se(M)$ the size of the largest se-chains of $M$.  
By the symmetry of $\ne$ and $\se$ we have that Lemma \ref{lemma3-1} also holds for $\se$. 

It is known that in 01-fillings of a Ferrers shape with fixed row sum and column sum in $\mathbf{N}$,
 the pair $(\ne, \se)$ may not distribute  symmetrically (e.g.  see \cite{Rubey,deMier06}). 
On the other hand,  when both $\mathbf{r}, \mathbf{c} \in \{0,1\}$, $(\ne, \se)$ does have a symmetric joint distribution. 
This was proved for Ferrers shapes by Krattenthaler \cite{Krattenthaler}, and implied for moon polyominoes by Rubey
\cite[Section 4.2]{Rubey}. 

The above results raised the question whether for almost-moon polyominoes the pair of  statistics $(\ne, \se)$ has 
a symmetric joint distribution when one or both of $\mathbf{r}, \mathbf{c}$ are in $\{0,1\}$, and whether 
the distribution of $(\ne, \se)$  is unchanged when one swaps two adjacent rows. 

The answers are all negative. In the following we give two set of counterexamples. 
The first one is for the case that $\mathbf{r} \in \{0,1\}$ but $\mathbf{c} \in \mathbf{N}$. 
The involved polyominoes are of small sizes, and we can list all the fillings explicitly.  
The second is for the case when both $\mathbf{r}$ and $\mathbf{c}$ are in $\{0,1\}$. 
We found the counterexample by running a computer program, and we will just describe the results without listing all the details.

\begin{exa} \label{exa:symmetry} 
Figures \ref{fig:not-sym0} and \ref{fig:not-sym}  list all the 01-fillings of three polyominoes, where the polyominoes in Figure \ref{fig:not-sym}  are obtained from 
the moon polyomino in Figure \ref{fig:not-sym0}  by moving down the first row. 
In all the fillings we require that   $\mathbf{r}=(1,1,1,1)$ and $\mathbf{c}=(2,1,1)$.  
The data $(\ne, \se)$ is given under each filling. 
\end{exa}

Figure \ref{fig:not-sym0} shows that even for moon polyomino with $\mathbf{r} \in \{0,1\}$ but $\mathbf{c} \in \mathbb{N}$, 
the distribution of $(\ne, \se)$ is not necessarily symmetric. 
Figure \ref{fig:not-sym} gives an example that  the distribution of the pair $(\ne, \se)$ is not  preserved when two adjacent rows are swapped in an almost-moon polyomino. 
Note that the first two fillings  $(M, M')$ in Figure \ref{fig:not-sym} are coupling fillings, 
whose corresponding coupling fillings are the first two fillings $(N, N')$ in the second row of  Figure \ref{fig:not-sym}.  
For these two pairs 
  Lemma \ref{lemma3-1} does not hold for $(\ne, \se)$, i.e., 
\[
\{ (\ne(M), \se(M)), (\ne(M'), \se(M') \} \neq \{ (\ne(N),\se(N)), (\ne(N'), \se(N')) \}. 
\]
 

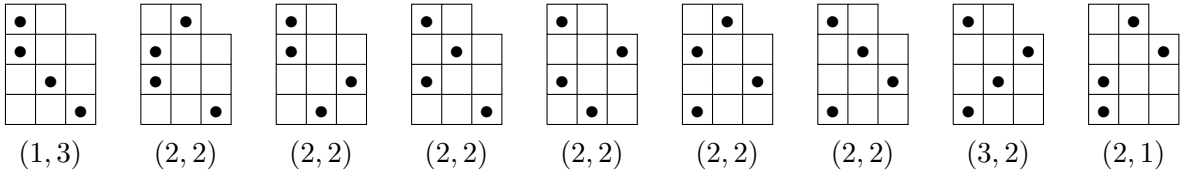
\begin{figure}[ht]
\begin{center}
\begin{tikzpicture}[scale=0.4]
\draw  (0,1)--(3,1)--(3,4)--(0,4)
                        (0,1)--(0,5)--(2,5)--(2,1)
                        (1,1)--(1,5) 
                         (0,2)--(3,2) (0,3)--(3,3) 
                        (1.5,0) node {$(1,3)$}
                        (0.5,4.4) node {$\bullet$} 
                         (0.5, 3.4) node {$\bullet$} 
                         (1.5,2.4) node {$\bullet$} 
                         (2.5,1.4) node {$\bullet$}; 

\draw[xshift=4.5cm]  (0,1)--(3,1)--(3,4)--(0,4)
                        (0,1)--(0,5)--(2,5)--(2,1)
                        (1,1)--(1,5) 
                         (0,2)--(3,2) (0,3)--(3,3) 
                        (1.5,0) node {$(2,2)$}
                        (0.5,2.4) node {$\bullet$} 
                         (0.5, 3.4) node {$\bullet$} 
                         (1.5,4.4) node {$\bullet$}                          (2.5,1.4) node {$\bullet$}; 

\draw[xshift=9cm]  (0,1)--(3,1)--(3,4)--(0,4)
                        (0,1)--(0,5)--(2,5)--(2,1)
                        (1,1)--(1,5) 
                         (0,2)--(3,2) (0,3)--(3,3) 
                        (1.5,0) node {$(2,2)$}
                        (0.5,4.4) node {$\bullet$} 
                         (0.5, 3.4) node {$\bullet$} 
                         (1.5,1.4) node {$\bullet$} 
                         (2.5,2.4) node {$\bullet$}; 

\draw[xshift=13.5cm]  (0,1)--(3,1)--(3,4)--(0,4)
                        (0,1)--(0,5)--(2,5)--(2,1)
                        (1,1)--(1,5) 
                         (0,2)--(3,2) (0,3)--(3,3) 
                        (1.5,0) node {$(2,2)$}
                        (0.5,2.4) node {$\bullet$} 
                         (0.5, 4.4) node {$\bullet$} 
                         (1.5,3.4) node {$\bullet$} 
                         (2.5,1.4) node {$\bullet$}; 

\draw[xshift=18cm]  (0,1)--(3,1)--(3,4)--(0,4)
                        (0,1)--(0,5)--(2,5)--(2,1)
                        (1,1)--(1,5) 
                         (0,2)--(3,2) (0,3)--(3,3) 
                        (1.5,0) node {$(2,2)$}
                        (0.5,2.4) node {$\bullet$} 
                         (0.5, 4.4) node {$\bullet$} 
                         (1.5,1.4) node {$\bullet$} 
                         (2.5,3.4) node {$\bullet$}; 

\draw[xshift=22.5cm]  (0,1)--(3,1)--(3,4)--(0,4)
                        (0,1)--(0,5)--(2,5)--(2,1)
                        (1,1)--(1,5) 
                         (0,2)--(3,2) (0,3)--(3,3) 
                        (1.5,0) node {$(2,2)$}
                        (0.5,1.4) node {$\bullet$} 
                         (0.5, 3.4) node {$\bullet$} 
                         (1.5,4.4) node {$\bullet$} 
                         (2.5,2.4) node {$\bullet$}; 

\draw[xshift=27cm]  (0,1)--(3,1)--(3,4)--(0,4)
                        (0,1)--(0,5)--(2,5)--(2,1)
                        (1,1)--(1,5) 
                         (0,2)--(3,2) (0,3)--(3,3) 
                        (1.5,0) node {$(2,2)$}
                        (0.5,1.4) node {$\bullet$} 
                         (0.5, 4.4) node {$\bullet$} 
                         (1.5,3.4) node {$\bullet$} 
                         (2.5,2.4) node {$\bullet$}; 

\draw[xshift=31.5cm]  (0,1)--(3,1)--(3,4)--(0,4)
                        (0,1)--(0,5)--(2,5)--(2,1)
                        (1,1)--(1,5) 
                         (0,2)--(3,2) (0,3)--(3,3) 
                        (1.5,0) node {$(3,2)$}
                        (0.5,1.4) node {$\bullet$} 
                         (0.5, 4.4) node {$\bullet$} 
                         (1.5,2.4) node {$\bullet$} 
                         (2.5,3.4) node {$\bullet$}; 

\draw[xshift=36cm]  (0,1)--(3,1)--(3,4)--(0,4)
                        (0,1)--(0,5)--(2,5)--(2,1)
                        (1,1)--(1,5) 
                         (0,2)--(3,2) (0,3)--(3,3) 
                        (1.5,0) node {$(2,1)$}
                        (0.5,2.4) node {$\bullet$} 
                         (0.5, 1.4) node {$\bullet$} 
                         (1.5,4.4) node {$\bullet$} 
                         (2.5,3.4) node {$\bullet$};

\end{tikzpicture}
\caption{Fillings of a moon polyomino with $\mathbf{r}= (1,1,1,1)$ and 
$\mathbf{c}= (2,1,1)$.  } \label{fig:not-sym0}
\end{center} 
\end{figure}

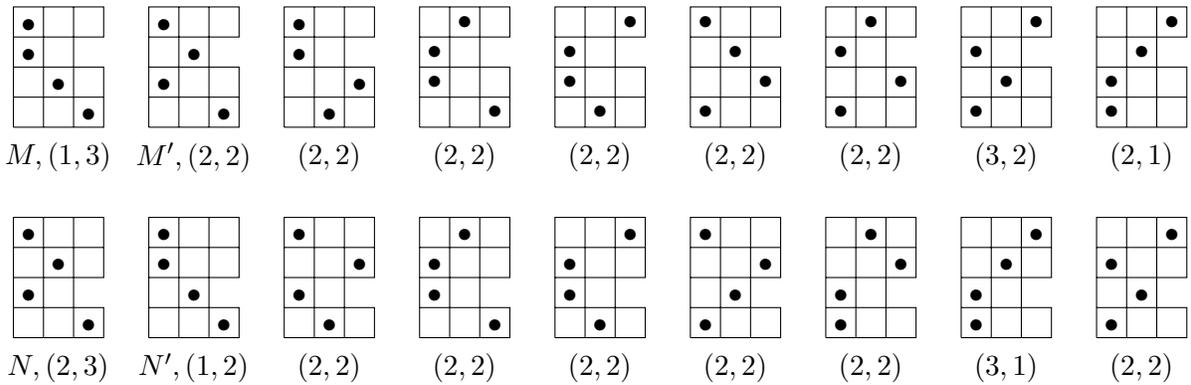
\begin{figure}[ht]
\begin{center}
\begin{tikzpicture}[scale=0.4]
\draw[yshift=7cm]  (0,1)--(3,1)--(3,3)--(0,3)
                        (0,1)--(0,5)--(3,5)--(3,4)--(0,4)
                        (1,1)--(1,5) (2,1)--(2,5)
                         (0,2)--(3,2) (1.5,0) node {$M,  (1,3)$}
                        (0.5,4.4) node {$\bullet$} 
                         (0.5, 3.4) node {$\bullet$} 
                         (1.5,2.4) node {$\bullet$} 
                         (2.5,1.4) node {$\bullet$}; 
                         
\draw [xshift=4.5cm,yshift=7cm] (0,1)--(3,1)--(3,3)--(0,3)
                        (0,1)--(0,5)--(3,5)--(3,4)--(0,4)
                        (1,1)--(1,5) (2,1)--(2,5)
                         (0,2)--(3,2) (1.5,0) node {$M', (2,2)$}
                        (0.5,4.4) node {$\bullet$} 
                         (0.5, 2.4) node {$\bullet$} 
                         (1.5, 3.4) node {$\bullet$} 
                         (2.5,1.4) node {$\bullet$}; 

\draw [xshift=9cm,yshift=7cm] (0,1)--(3,1)--(3,3)--(0,3)
                        (0,1)--(0,5)--(3,5)--(3,4)--(0,4)
                        (1,1)--(1,5) (2,1)--(2,5)
                         (0,2)--(3,2) (1.5,0) node {$(2,2)$}
                        (0.5,4.4) node {$\bullet$} 
                         (0.5, 3.4) node {$\bullet$} 
                         (1.5, 1.4) node {$\bullet$} 
                         (2.5,2.4) node {$\bullet$}; 

\draw[xshift=13.5cm,yshift=7cm] (0,1)--(3,1)--(3,3)--(0,3)
                        (0,1)--(0,5)--(3,5)--(3,4)--(0,4)
                        (1,1)--(1,5) (2,1)--(2,5)
                         (0,2)--(3,2)  (1.5,0) node {$(2,2)$}
                      (0.5,2.5)  node {$\bullet$} 
                       (0.5, 3.5)  node {$\bullet$} 
                        (1.5,4.5)  node {$\bullet$} 
                         (2.5,1.5)  node {$\bullet$} ;

\draw[xshift=18cm,yshift=7cm] (0,1)--(3,1)--(3,3)--(0,3)
                        (0,1)--(0,5)--(3,5)--(3,4)--(0,4)
                        (1,1)--(1,5) (2,1)--(2,5)
                         (0,2)--(3,2)  (1.5,0) node {$(2,2)$}
                      (0.5,2.5)  node {$\bullet$} 
                       (0.5, 3.5)  node {$\bullet$} 
                        (1.5,1.5)  node {$\bullet$} 
                         (2.5,4.5)  node {$\bullet$} ;

\draw[xshift=22.5cm,yshift=7cm] (0,1)--(3,1)--(3,3)--(0,3)
                        (0,1)--(0,5)--(3,5)--(3,4)--(0,4)
                        (1,1)--(1,5) (2,1)--(2,5)
                         (0,2)--(3,2)  (1.5,0) node {$(2,2)$}
                      (0.5,1.5)  node {$\bullet$} 
                       (0.5, 4.5)  node {$\bullet$} 
                        (1.5,3.5)  node {$\bullet$} 
                         (2.5,2.5)  node {$\bullet$} ;

\draw[xshift=27cm,yshift=7cm] (0,1)--(3,1)--(3,3)--(0,3)
                        (0,1)--(0,5)--(3,5)--(3,4)--(0,4)
                        (1,1)--(1,5) (2,1)--(2,5)
                         (0,2)--(3,2)  (1.5,0) node {$(2,2)$}
                      (0.5,1.5)  node {$\bullet$} 
                       (0.5, 3.5)  node {$\bullet$} 
                        (1.5,4.5)  node {$\bullet$} 
                         (2.5,2.5)  node {$\bullet$} ;

\draw[xshift=31.5cm,yshift=7cm] (0,1)--(3,1)--(3,3)--(0,3)
                        (0,1)--(0,5)--(3,5)--(3,4)--(0,4)
                        (1,1)--(1,5) (2,1)--(2,5)
                         (0,2)--(3,2)  (1.5,0) node {$(3,2)$}
                      (0.5,1.5)  node {$\bullet$} 
                       (0.5, 3.5)  node {$\bullet$} 
                        (1.5,2.5)  node {$\bullet$} 
                         (2.5,4.5)  node {$\bullet$} ;

\draw[xshift=36cm,yshift=7cm] (0,1)--(3,1)--(3,3)--(0,3)
                        (0,1)--(0,5)--(3,5)--(3,4)--(0,4)
                        (1,1)--(1,5) (2,1)--(2,5)
                         (0,2)--(3,2)  (1.5,0) node {$(2,1)$}
                      (0.5,1.5)  node {$\bullet$} 
                       (0.5, 2.5)  node {$\bullet$} 
                        (1.5,3.5)  node {$\bullet$} 
                         (2.5,4.5)  node {$\bullet$} ;

\draw[xshift=0cm] (0,1)--(3,1)--(3,2)--(0,2)
                        (0,1)--(0,5)--(3,5)--(3,3)--(0,3)
                        (1,1)--(1,5) (2,1)--(2,5)
                         (0,4)--(3,4) 
                         (0.5,4.4) node {$\bullet$} 
                         (0.5, 2.4) node {$\bullet$} 
                         (1.5,3.4) node {$\bullet$} 
                         (2.5,1.4) node {$\bullet$} 
                         (1.5,0) node {$N, (2,3)$};

\draw[xshift=4.5cm] (0,1)--(3,1)--(3,2)--(0,2)
                        (0,1)--(0,5)--(3,5)--(3,3)--(0,3)
                        (1,1)--(1,5) (2,1)--(2,5)
                         (0,4)--(3,4) 
                         (0.5,4.4) node {$\bullet$} 
                         (0.5, 3.4) node {$\bullet$} 
                         (1.5,2.4) node {$\bullet$} 
                         (2.5,1.4) node {$\bullet$} 
                         (1.5,0) node {$N', (1,2)$};      

\draw[xshift=9cm] (0,1)--(3,1)--(3,2)--(0,2)
                        (0,1)--(0,5)--(3,5)--(3,3)--(0,3)
                        (1,1)--(1,5) (2,1)--(2,5)
                         (0,4)--(3,4) 
                         (0.5,4.4) node {$\bullet$} 
                         (0.5, 2.4) node {$\bullet$} 
                         (1.5,1.4) node {$\bullet$} 
                         (2.5,3.4) node {$\bullet$} 
                         (1.5,0) node {$ (2,2)$};

\draw[xshift=13.5cm] (0,1)--(3,1)--(3,2)--(0,2)
                        (0,1)--(0,5)--(3,5)--(3,3)--(0,3)
                        (1,1)--(1,5) (2,1)--(2,5)
                         (0,4)--(3,4) 
                         (0.5,2.4) node {$\bullet$} 
                         (0.5, 3.4) node {$\bullet$} 
                         (1.5,4.4) node {$\bullet$} 
                         (2.5,1.4) node {$\bullet$} 
                         (1.5,0) node {$(2,2)$};

\draw[xshift=18cm] (0,1)--(3,1)--(3,2)--(0,2)
                        (0,1)--(0,5)--(3,5)--(3,3)--(0,3)
                        (1,1)--(1,5) (2,1)--(2,5)
                         (0,4)--(3,4) 
                         (0.5,2.4) node {$\bullet$} 
                         (0.5, 3.4) node {$\bullet$} 
                         (1.5,1.4) node {$\bullet$} 
                         (2.5,4.4) node {$\bullet$} 
                         (1.5,0) node {$(2,2)$};

\draw[xshift=22.5cm] (0,1)--(3,1)--(3,2)--(0,2)
                        (0,1)--(0,5)--(3,5)--(3,3)--(0,3)
                        (1,1)--(1,5) (2,1)--(2,5)
                         (0,4)--(3,4) 
                         (0.5,1.4) node {$\bullet$} 
                         (0.5, 4.4) node {$\bullet$} 
                         (1.5,2.4) node {$\bullet$} 
                         (2.5,3.4) node {$\bullet$} 
                         (1.5,0) node {$(2,2)$};      

\draw[xshift=27cm] (0,1)--(3,1)--(3,2)--(0,2)
                        (0,1)--(0,5)--(3,5)--(3,3)--(0,3)
                        (1,1)--(1,5) (2,1)--(2,5)
                         (0,4)--(3,4) 
                         (0.5,2.4) node {$\bullet$} 
                         (0.5, 1.4) node {$\bullet$} 
                         (1.5,4.4) node {$\bullet$} 
                         (2.5,3.4) node {$\bullet$} 
                         (1.5,0) node {$(2,2)$};

\draw[xshift=31.5cm] (0,1)--(3,1)--(3,2)--(0,2)
                        (0,1)--(0,5)--(3,5)--(3,3)--(0,3)
                        (1,1)--(1,5) (2,1)--(2,5)
                         (0,4)--(3,4) 
                         (0.5,1.4) node {$\bullet$} 
                         (0.5, 2.4) node {$\bullet$} 
                         (1.5,3.4) node {$\bullet$} 
                         (2.5,4.4) node {$\bullet$} 
                         (1.5,0) node {$(3,1)$};

\draw[xshift=36cm] (0,1)--(3,1)--(3,2)--(0,2)
                        (0,1)--(0,5)--(3,5)--(3,3)--(0,3)
                        (1,1)--(1,5) (2,1)--(2,5)
                         (0,4)--(3,4) 
                         (0.5,3.4) node {$\bullet$} 
                         (0.5, 1.4) node {$\bullet$} 
                         (1.5,2.4) node {$\bullet$} 
                         (2.5,4.4) node {$\bullet$} 
                         (1.5,0) node {$(2,2)$};

\end{tikzpicture}
\caption{Fillings of almost-moon polyominoes. The first row is for an almost-moon polyomino $\M$, 
and the second row is for $\N$ which differ from $\M$ by swapping the second and the third rows. } \label{fig:not-sym}
\end{center} 
\end{figure} 

\begin{exa} \label{almost-permutation} 
Our second example is restricted to 01-fillings where each row as well as each column has exactly one 1. 
We say that  such fillings are \emph{restricted}. 
Let $\M_1$, $\M_2$ and $\M_3$ be the polyominoes whose cells are given by
\begin{eqnarray*} 
\M_1 = \{ (i,j): \ 1 \leq i \leq 6, \  1 \leq j \leq 6\} -\{ (1,6)\}.  \\
\M_2=\{ (i,j): \ 1 \leq i \leq 6,  \ 1 \leq j \leq 6\} -\{ (1,5)\}.  \\
\M_3= \{ (i,j): \ 1 \leq i \leq 6, \  1 \leq j \leq 6\} -\{ (1,4)\}.  
\end{eqnarray*} 
Each polyomino has 600 restricted fillings. 
Let $G_1(x,y) = \sum_M x^{\ne(M) } y^{\se(M)}$ be the joint distribution of $(\ne,\se)$ over restricted fillings of $\M_1$. 
Similarly define $G_2(x,y)$ and $G_3(x,y)$ for restricted fillings in $\M_2$ and $\M_3$. 
With the help of a computer program we obtained 
\begin{align} \label{symm}  
G_1(x,y) =G_2(x,y)=  xy^5+x^5y + 72(x^2y^4+x^4y^2) + 48 (x^3y^4+x^4y^3) \nonumber \\ +  50(x^2y^3+x^3y^2) + 8(x^2y^5+x^5y^2) + 242 x^3y^3 
\end{align}
and 
\begin{align}  \label{no-symm} 
G_3(x,y) = xy^5+x^5y + 72x^2y^4+73x^4y^2 + 48x^3y^4+47x^4y^3 \nonumber \\+ 50 x^2y^3+49x^3y^2+ 8x^2y^5+8x^5y^2+ 243 x^3y^3.
\end{align}  
\end{exa} 
Equation \eqref{symm} is symmetric with respect to $x,y$, which is expected for $\M_1$ since it is a moon polyomino. 
Equation \eqref{no-symm} shows that the joint distribution of $(\ne, \se)$ over almost-moon polyominoes is not
necessarily symmetric, even if we require that every row and every column has exactly one 1. 
The difference between the two equations implies that the distribution of $(\ne, \se)$ may not be preserved
when two adjacent rows are swapped.

\noindent $\bullet$ \ \emph{Coupling fillings  with $\mathbf{r} \in \mathbb{N}^*$.  }

Another question is whether we can extend the idea of \emph{coupling} in Theorem \ref{thm2} to construct 
a bijection for Theorem \ref{thm1}. The following example shows that the direct application does not work.

\begin{exa} 
Consider fillings in $\F(\M,*, \mathbf{c})$ where a row may have multiple $1$s. 
For a filling $M$ of $\M$, we couple with it the filling $M'$ obtained from $M$ by swapping the fillings
$\alpha $ and $\beta$, just as what we did in Section 4. Again let  $N=f_{\M,\N}(M')$ and $N'= f_{\M,\N}(M)$. 
In the fillings shown in Figure \ref{fig:counter2}, $\ne(M)=3$, $\ne(M')=4$ while $\ne(N)=\ne(N')=4$. Lemma 
\ref{lemma3-1} does not hold for this case. 
\end{exa} 

\begin{figure}[ht]
\begin{center}
\begin{tikzpicture}[scale=0.4]
\draw
    (0,2)--(8,2)--(8,5)--(0,5)--(0,2)
    (0,6)--(8,6)--(8,8)--(0,8)--(0,6)
    (1,8)--(1,1)--(7,1)--(7,8)  
    (0,3)--(8,3)
    (0,4)--(8,4)
    (0,7)--(8,7)
    (2,1)--(2,8) (3,1)--(3,8) (4,1)--(4,8) (5,1)--(5,8) (6,1)--(6,8) 
    (0.5,2.4) node {$\bullet$} 
    (1.5,1.4) node {$\bullet$} 
    (2.5,5.4) node {$\bullet$} 
    (3.5,4.4) node {$\bullet$} 
    (4.5, 7.4) node {$\bullet$} 
    (5.5, 3.4) node {$\bullet$} 
    (6.5,5.4) node {$\bullet$} 
    (7.5,6.4) node {$\bullet$}
    (4,0) node {$M$};
    
\draw[xshift=10cm]
    (0,2)--(8,2)--(8,5)--(0,5)--(0,2)
    (0,6)--(8,6)--(8,8)--(0,8)--(0,6)
    (1,8)--(1,1)--(7,1)--(7,8)  
    (0,3)--(8,3)
    (0,4)--(8,4)
    (0,7)--(8,7)
    (2,1)--(2,8) (3,1)--(3,8) (4,1)--(4,8) (5,1)--(5,8) (6,1)--(6,8) 
    (0.5,2.4) node {$\bullet$} 
    (1.5,1.4) node {$\bullet$} 
    (2.5,4.4) node {$\bullet$} 
    (3.5,5.4) node {$\bullet$} 
    (4.5, 7.4) node {$\bullet$} 
    (5.5, 3.4) node {$\bullet$} 
    (6.5,4.4) node {$\bullet$} 
    (7.5,6.4) node {$\bullet$}  
    (4,0) node {$M'$};
\draw[xshift=10cm,red] (4.5,7.5) circle (10pt) 
                       (3.5,5.5)  circle (10pt) 
                       (2.5,4.5) circle (10pt) 
                       (1.5,1.5)  circle (10pt) ; 
\draw[xshift=20cm]
     (0,2)--(8,2)--(8,4)--(0,4)--(0,2)
    (0,5)--(8,5)--(8,8)--(0,8)--(0,5)
    (1,8)--(1,1)--(7,1)--(7,8)  
    (0,3)--(8,3)
    (0,6)--(8,6)
    (0,7)--(8,7)
    (2,1)--(2,8) (3,1)--(3,8) (4,1)--(4,8) (5,1)--(5,8) (6,1)--(6,8) 
    (0.5,2.4) node {$\bullet$} 
    (1.5,1.4) node {$\bullet$} 
    (2.5,4.4) node {$\bullet$} 
    (3.5,5.4) node {$\bullet$} 
    (4.5, 7.4) node {$\bullet$} 
    (5.5, 3.4) node {$\bullet$} 
    (6.5,4.4) node {$\bullet$} 
    (7.5,6.4) node {$\bullet$}
    (4,0) node {$N$};
    \draw[xshift=20cm,red] (4.5,7.5) circle (10pt) 
                       (3.5,5.5)  circle (10pt) 
                       (2.5,4.5) circle (10pt) 
                       (1.5,1.5)  circle (10pt) ; 

\draw[xshift=30cm]
     (0,2)--(8,2)--(8,4)--(0,4)--(0,2)
    (0,5)--(8,5)--(8,8)--(0,8)--(0,5)
    (1,8)--(1,1)--(7,1)--(7,8)  
    (0,3)--(8,3)
    (0,6)--(8,6)
    (0,7)--(8,7)
    (2,1)--(2,8) (3,1)--(3,8) (4,1)--(4,8) (5,1)--(5,8) (6,1)--(6,8) 
   (0.5,2.4) node {$\bullet$} 
    (1.5,1.4) node {$\bullet$} 
    (2.5,5.4) node {$\bullet$} 
    (3.5,4.4) node {$\bullet$} 
    (4.5, 7.4) node {$\bullet$} 
    (5.5, 3.4) node {$\bullet$} 
    (6.5,5.4) node {$\bullet$} 
    (7.5,6.4) node {$\bullet$} 
    (4,0) node {$N'$};
 \draw[xshift=30cm,red] (7.5,6.5) circle (10pt) 
                       (6.5,5.5)  circle (10pt) 
                       (5.5,3.5) circle (10pt) 
                       (0.5,2.5)  circle (10pt) ; 

 \end{tikzpicture}
\caption{Coupling fillings in $\F(\M, \textbf{r}, \textbf{c})$ with $\mathbf{r} \in \mathbb{N}^*$ and $\mathbf{r} \in \{0,1\}^*$. 
The circled dots form  4-chains in the polyominoes.
} 
\label{fig:counter2} 
\end{center} 
\end{figure}
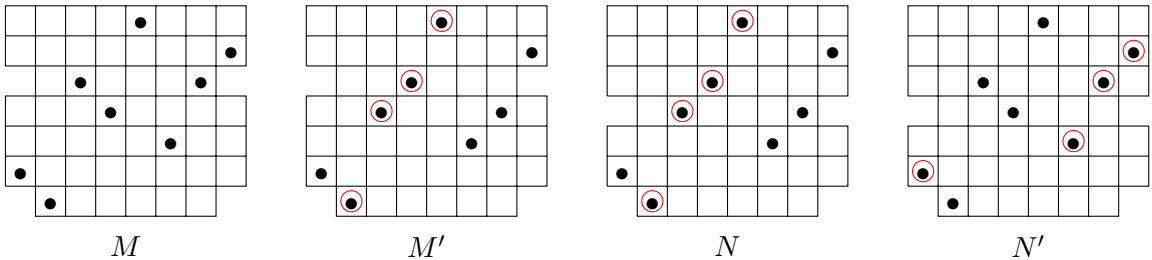

It is still open  whether Theorem 3 holds for the family of fillings that $\mathbf{r} \in \mathbb{N}^*$ but 
$\mathbf{c} \in \{0,1\}^*$  We point out that Lemma \ref{lemma3-1} does not hold in this case either.
Given a filling $M$ with multiple 1-cells in a row,  the natural way to define the coupling with the same row sum  
is to let  $M'$ be obtained from $M$ by keeping the empty columns of $\alpha \cup \beta$  and reversing 
the fillings in the remaining columns of  $\alpha \cup \beta$. 
Then $(N,N')$ are obtained from  $(M,M')$ by exchanging the two rows $\R_s$ and $R_l$  with their fillings. 
   
\begin{exa} 
The fillings in Figure \ref{fig:counter1}  gives an example where $\ne(M)=2, \ne(M')=3$ while $ \ne(N)= \ne(N')=3$.
\end{exa} 
\begin{figure}[t] 
\begin{center} 
\begin{tikzpicture}[scale=0.5]
\draw[xshift=0cm, yshift=7cm] (0,1)--(8,1)--(8,5)--(0,5)--(0,4)--(8,4)
(0,1)--(0,3)--(8,3) (0,2)--(8,2) (1,1)--(1,5) (2,1)--(2,5) (3,1)--(3,5) (4,1)--(4,5) (5,1)--(5,5) (6,1)--(6,5) (7,1)--(7,5) 
 (0.5,1.4) node {$\bullet$} 
 (4.5,1.4) node {$\bullet$} 
 (3.5,2.4) node {$\bullet$} 
 (6.5,2.4) node {$\bullet$} 
 (7.5, 2.4) node {$\bullet$} 
 (1.5,3.4) node {$\bullet$} 
 (5.5, 3.4) node {$\bullet$} 
  (2.5,4.4) node {$\bullet$} 
  (4,0) node  {$M$};

\draw[xshift=10cm, yshift=7cm] (0,1)--(8,1)--(8,5)--(0,5)--(0,4)--(8,4) (0,1)--(0,3)--(8,3) (0,2)--(8,2) (1,1)--(1,5) (2,1)--(2,5) (3,1)--(3,5) (4,1)--(4,5) (5,1)--(5,5) (6,1)--(6,5) (7,1)--(7,5) 
 (0.5,1.4) node {$\bullet$} 
 (4.5,1.4) node {$\bullet$} 
 (3.5,2.4) node {$\bullet$} 
 (6.5,2.4) node {$\bullet$} 
 (1.5, 2.4) node {$\bullet$} 
 (7.5,3.4) node {$\bullet$} 
 (5.5, 3.4) node {$\bullet$}  
  (2.5,4.4) node {$\bullet$} 
  (4,0) node  {$M'$}; 

\draw[xshift=0cm] (0,1)--(8,1)--(8,5)--(0,5)--(0,4)--(8,4) (0,1)--(0,2)--(8,2) (0,4)--(0,3)--(8,3) (1,1)--(1,5) (2,1)--(2,5) (3,1)--(3,5) (4,1)--(4,5) (5,1)--(5,5) (6,1)--(6,5) (7,1)--(7,5) 
 (0.5,1.4) node {$\bullet$} 
 (4.5,1.4) node {$\bullet$} 
 (1.5,2.4) node {$\bullet$} 
 (5.5,2.4) node {$\bullet$} 
 (6.5, 3.4) node {$\bullet$} 
 (3.5,3.4) node {$\bullet$} 
 (7.5, 3.4) node {$\bullet$} 
  (2.5,4.4) node {$\bullet$} 
  (4,0) node  {$N$}; 

\draw[xshift=10cm] (0,1)--(8,1)--(8,5)--(0,5)--(0,4)--(8,4) (0,1)--(0,2)--(8,2) (0,4)--(0,3)--(8,3) (1,1)--(1,5) (2,1)--(2,5) (3,1)--(3,5) (4,1)--(4,5) (5,1)--(5,5) (6,1)--(6,5) (7,1)--(7,5) 
 (0.5,1.4) node {$\bullet$} 
 (4.5,1.4) node {$\bullet$} 
 (5.5,2.4) node {$\bullet$} 
 (7.5,2.4) node {$\bullet$} 
 (6.5, 3.4) node {$\bullet$} 
 (1.5,3.4) node {$\bullet$} 
 (3.5, 3.4) node {$\bullet$} 
  (2.5,4.4) node {$\bullet$} 
  (4,0) node  {$N'$}; 

\end{tikzpicture}
\caption{Coupling fillings in $\F(\M, \textbf{r}, \textbf{c})$ with $\mathbf{r} \in \mathbb{N}^*$ and $\mathbf{c} \in \{0,1\}^*$.   } 
\label{fig:counter1} 
\end{center} 
\end{figure}

\noindent $\bullet$ \ 
 \emph{Statistic $\ne$ in fillings of general polyominoes} 
 
 It is natural to ask  whether Theorem \ref{thm1}  or Theorem \ref{thm2} can be extended to a more general family of polyominoes.  The following example shows that if there are more than one exceptional rows in the polyomino, then  the distribution of  $\ne(M)$ may not be the same after a swap of two adjacent rows.  

\begin{exa} 
This example was first given in \cite{PYY13} for \emph{layer polyominoes}, which are polyominoes that are row-convex and 
row-intersection-free.   The right one is almost-moon but the left one is not.  
Let $G(x, \F) = \sum_{M \in \F} x^{\ne(M)}$ be the generating function for the statistic $\ne$ over the fillings in a set $\F$. 
First consider  01-fillings
where every row and every column has
exactly one $1$.  We obtained the following generating functions. 
For the left polyomino $\M_1$,
\[
G(x, \F(\M_1, \mathbf{r}, \mathbf{c})) = x + 37x^2 + 31x^3 + 3x^4,
\]
 while for the right polyomino $\M_2$, 
\[
G(x, \F(\M_2, \mathbf{r}, \mathbf{c})) =  p + 36 x^2 +32 x^3+3 x^4,
\]
where $\mathbf{r}=\mathbf{c}=(1,1,1,1,1)$. 
This provides a counterexample of Theorem \ref{thm2} for general polyominoes. 
\end{exa} 

\begin{figure}[ht] 
\begin{center}
\begin{tikzpicture}[line width=0.7pt,scale=0.5]
\draw (5,2)--(0,2)--(0,0)--(5,0)--(5,5)--(0,5)--(0,4)--(5,4)
          (1,0)--(1,5) (2,0)--(2,5) (3,0)--(3,5) (4,0)--(4,5)
           (0,1)--(5,1) (1,3)--(5,3) ;
\draw (10,0)--(15,0)--(15,5)--(11,5)--(11,0) 
          (10,0)--(10,2)--(15,2) 
          (15,3)--(10,3)--(10,4)--(15,4)
          (10,1)--(15,1)
          (12,0)--(12,5) (13,0)--(13,5) (14,0)--(14,5)  
          ;
\node at (2.5,-1) {$\M_1$};
\node at (12.5, -1) {$\M_2$}; 
 \end{tikzpicture} 
 \end{center} 
\caption{Two general polyominoes related by an interchange of two adjacent rows.}  \label{fig:counter3} 
\end{figure}
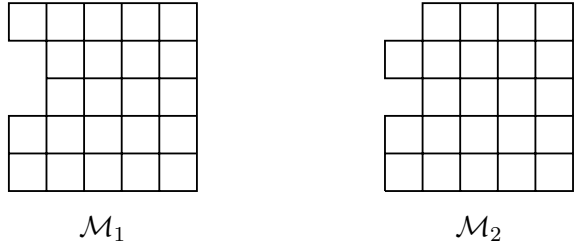 

The above example also implies that Theorem \ref{thm1} can not hold for general polyominoes. Namely, take the same two polyominoes and 
consider all the fillings in which each column has exactly one 1, but there is no constraint on the row.  
Note that 
\begin{itemize} 
 \item  Empty rows do not affect the statistic $\ne$ and can be ignored.
 \item  Any sub-polyomino of $\M_1$ or $\M_2$ containing three rows is an almost-moon polyomino. By Theorem \ref{thm1} rearranging rows 
for such three-row polyominoes does not change the distribution of $\ne(M)$ over $\F(\M, *, \mathbf{c})$. 
\end{itemize}
Now let  $\mathbf{c}=(1,1,1,1,1)$ 
we have 
\begin{enumerate} 
\item Either $G(\ne, \F(\M_1, *, \mathbf{c})) \neq G(\ne,  \F(\M_2, * , \mathbf{c}))$ and we have the desired counterexample, or 
\item $G(\ne, \F(\M_1, *, \mathbf{c})) = G(\ne, \F(\M_2, * , \mathbf{c}))$. But then the example from the previous paragraph 
implies that the distribution of $\ne$ over  01-fillings in $\F(\M_1, *, \mathbf{c})$ and $ \F(\M_2, * , \mathbf{c})$ with empty 
rows are different.
Note that 
the set of fillings of a polyomino $\M$  with empty rows can be obtained as the 
union of set $\F_i(\M, \mathbf{r}, \mathbf{c})$,  which consists of fillings 
in which the $i$-th row is empty. 
 An application of the inclusion-exclusion principle 
 implies that there is a sub-polyomino $\N_1$ of $\M_1$ consisting of 4 rows such that 
$G(\ne,  \F(\N_1, *, \mathbf{c})) \neq G(\ne,  \F(\N_2, * , \mathbf{c}))$ where $\N_2$ is the sub-polyomino of $\M_2$ 
consisting of the same four rows as in $\N_1$. 
\end{enumerate} 
  
\section*{Acknowledgment} 
We thank Hua Peng for writing a computer program to help us get the counterexamples in Example \ref{almost-permutation}.

\end{document}